  \theoremstyle{plain}
    \newtheorem{thm}{Theorem}[section]
    \newtheorem{prop}[thm]{Proposition}
    \newtheorem{subsec}[thm]{}
\theoremstyle{definition}
    \newtheorem{defn}[thm]{Definition}
        \newtheorem{remark}[thm]{Remark}
    \newtheorem{exam}[thm]{Example}
    \newtheorem{notation}[thm]{Notation}
\theoremstyle{remark}
\newcommand{\R}{\mathbb{R}}
\title{}
\author{}
\date{}
\begin{document}
\title{Bimodules over relative Rota-Baxter algebras and cohomologies}

\author{Apurba Das}
\address{Department of Mathematics,
Indian Institute of Technology Kharagpur, Kharagpur-721302, West Bengal, India.}
\email{apurbadas348@gmail.com, apurbadas348@maths.iitkgp.ac.in}

\author{Satyendra Kumar Mishra}
\address{Institute for Advancing Intelligence, TCG-Centres for Research and Education
in Science and Technology, Calcutta, West Bengal, India.}
\email{satyendra.mishra@tcgcrest.org; satyamsr10@gmail.com}


\keywords{Relative Rota-Baxter algebras, Bimodules, Cohomology, Abelian extensions, Homotopy algebras.}

\begin{abstract}
A relative Rota-Baxter algebra is a generalization of a Rota-Baxter algebra. Relative Rota-Baxter algebras are closely related to dendriform algebras. In this paper, we introduce bimodules over a relative Rota-Baxter algebra that fits with the representations of dendriform algebras. We define the cohomology of a relative Rota-Baxter algebra with coefficients in a bimodule and then study abelian extensions of relative Rota-Baxter algebras in terms of the second cohomology group. Finally, we consider homotopy relative Rota-Baxter algebras and classify skeletal homotopy relative Rota-Baxter algebras in terms of the above-defined cohomology.
\end{abstract}


\maketitle


\quad \quad 2020 Mathematics Subject Classification: {17B38, 16D20, 16E40, 18N40.}



\noindent

\thispagestyle{empty}

\tableofcontents

\vspace{0.2cm}

\section{Introduction}



Rota-Baxter operators were first appeared in a 1960's paper of Baxter \cite{baxter} as a tool to study fluctuation theory in probability. Subsequently, such operators were investigated by Rota \cite{rota}, Cartier \cite{cartier}, Atkinson \cite{atkinson} among others. In the last twenty years, Rota-Baxter operators received very much attention due to their connection with many branches of mathematics and mathematical physics. For instance, Rota-Baxter operators play a key role in the combinatorial study of trees and shuffles \cite{guo-adv}, splitting of algebras \cite{bai-splitting}, infinitesimal bialgebras \cite{aguiar-pre}, Hopf algebras \cite{zgg}, double algebras \cite{gon-kol} and renormalizations in quantum field theory \cite{conn}. 
Let $A$ be an associative algebra. A linear map $R: A \rightarrow A$ is said to be a Rota-Baxter operator on $A$ if
\begin{align*}
R(a) \cdot R(a') = R \big( R(a) \cdot a' ~+~ a \cdot R(a') \big), \text{ for } a, a' \in A.
\end{align*}
A pair $(A, R)$ consisting of an associative algebra $A$ and a Rota-Baxter operator $R$ is called a Rota-Baxter algebra. The notion of Rota-Baxter bimodules over a Rota-Baxter algebra was introduced by Guo and Lin in \cite{GuoLin}. Recently, Wang and Zhou \cite{wang-zhou} defined the cohomology of a Rota-Baxter algebra with coefficients in a Rota-Baxter bimodule. Such cohomology is closely related to abelian extensions and deformations of Rota-Baxter algebras. See \cite{guo-book} for more details about Rota-Baxter operators.

\medskip

In \cite{uchino} Uchino introduced a notion of generalized Rota-Baxter operator as an operator analogue of Poisson structures in the noncommutative setup. Later, generalized Rota-Baxter operators are also named as relative Rota-Baxter operators or $\mathcal{O}$-operators \cite{bai-o,das-rota}. They are mostly known as relative Rota-Baxter operators in the literature. Let $A$ be an associative algebra and $M$ be an $A$-bimodule. A linear map $R : M \rightarrow A$ is said to be a relative Rota-Baxter operator if
\begin{align*}
R(m) \cdot R(m') = R (R(m) \cdot_M m' ~+ ~ m \cdot_M R(m')),~\text{ for } m, m' \in M.
\end{align*}
Here $\cdot_M$ denotes the left and right $A$-actions on $M$. Recently, relative Rota-Baxter operators are cohomologically studied in \cite{das-rota}.
A relative Rota-Baxter (associative) algebra is a triple $(A, M, R)$ in which $A$ is an associative algebra, $M$ is an $A$-bimodule, and $R: M \rightarrow A$ is a relative Rota-Baxter operator. In the present paper, we denote a relative Rota-Baxter algebra as above by $M \xrightarrow{R} A$. It follows that any Rota-Baxter algebra $(A,R)$ can be considered as a relative Rota-Baxter algebra $A \xrightarrow{R} A$, where $A$ is equipped with the adjoint $A$-bimodule structure. It has been observed in \cite{uchino} that a relative Rota-Baxter algebra $M \xrightarrow{R} A$ induces a dendriform structure on $M$.

\medskip


Rota-Baxter operators and relative Rota-Baxter operators in the context of Lie algebras first appeared in the study of classical $r$-matrices \cite{kuper}. Cohomologies of relative Rota-Baxter operators on Lie algebras were initially studied in \cite{tang}. Then using higher derived brackets, the cohomology theory of relative Rota-Baxter Lie algebras were successfully established by Lazarev, Sheng and Tang in \cite{laza-rota}. Recently, cohomologies of relative Rota-Baxter Lie algebras with coefficients in an arbitrary representation were given by Jiang and Sheng in \cite{jiang-sheng}. Applications of these cohomologies were given to study infinitesimal deformations, abelian extensions and homotopies of relative Rota-Baxter Lie algebras (we refer to \cite{Guan, Guan2} for deformations and cohomologies of algebraic structures). Some aspects of the above study are already considered for relative Rota-Baxter (associative) algebras by the present authors \cite{DasSK}. More precisely, we defined cohomologies of relative Rota-Baxter algebras, which can be used to characterize infinitesimal deformations. Homotopy relative Rota-Baxter algebras were also given in the same paper. However, cohomologies of relative Rota-Baxter algebras are not yet studied from the representation-theoretic points of view.

\medskip

The main aim of this paper is to define bimodules over a relative Rota-Baxter algebra and introduce the cohomology of a relative Rota-Baxter algebra with coefficients in a bimodule. Let $M \xrightarrow{R} A$ be a relative Rota-Baxter algebra. A bimodule over it consists of a $2$-term chain complex $N \xrightarrow{S} B$ in which $B$ and $N$ are both $A$-bimodules, together with two bilinear maps $l:M\otimes B\rightarrow N$ and $r:B\otimes M\rightarrow N$ satisfying some compatibility relations (see Definition \ref{rRB-bimodule}). Note that our notion of bimodule over a relative Rota-Baxter algebra generalizes the Rota-Baxter bimodule over a Rota-Baxter algebra (see Example \ref{rb-bim}). Further, any relative Rota-Baxter algebra can be regarded as a bimodule over itself, called the adjoint bimodule. Given a bimodule over a relative Rota-Baxter algebra, we construct the corresponding dual bimodule. 

\medskip

Moreover, there is a semidirect product construction associated with a bimodule over a relative Rota-Baxter algebra (cf. Proposition \ref{semi-rrb}). Our notion of bimodule over a relative Rota-Baxter algebra nicely fits with the representations of dendriform algebras. More precisely, we show that a bimodule over a relative Rota-Baxter algebra gives rise to a representation of the induced dendriform algebra (cf. Proposition \ref{induced-dend-repn}). 

\medskip

Next, we introduce the cohomology of a relative Rota-Baxter algebra with coefficients in a bimodule. The cohomology of a relative Rota-Baxter algebra introduced in \cite{DasSK} turns out to be the cohomology of the relative Rota-Baxter algebra with coefficients in the adjoint bimodule. Then we consider abelian extensions of a relative Rota-Baxter algebra by a bimodule.  We show that isomorphism classes of such abelian extensions are classified by the second cohomology group (cf. Theorem \ref{abelian-ext-thm}).

\medskip


The notion of homotopy relative Rota-Baxter operators in the context of $A_\infty$-algebras (strongly homotopy associative algebras) was introduced in \cite{DasSK}. Like the classical case, a homotopy relative Rota-Baxter algebra consists of an $A_\infty$-algebra $\mathcal{A}$, an $\mathcal{A}$-bimodule $\mathcal{M}$ and a homotopy relative Rota-Baxter operator. In this paper, we consider homotopy relative Rota-Baxter algebras whose underlying graded vector spaces $\mathcal{A}, \mathcal{M}$ are concentrated in degrees $0$ and $1$. In particular, we classify skeletal homotopy relative Rota-Baxter algebras by third cocycles of relative Rota-Baxter algebras introduced in the present paper (cf. Theorem \ref{skeletal-thm}).

\medskip

The paper is organized as follows. In Section \ref{sec-2}, we recall the Hochschild cohomology theory of associative algebras and recall relative Rota-Baxter algebras. In Section \ref{sec-3}, we define bimodules over relative Rota-Baxter algebras and give various examples and constructions. The cohomology of a relative Rota-Baxter algebra with coefficients in a bimodule is introduced in Section \ref{sec-4}. In Section \ref{sec-5}, we study abelian extensions of relative Rota-Baxter algebras in terms of the second cohomology group. Finally, classification of skeletal homotopy relative Rota-Baxter algebras is given in Section \ref{sec-6}.

\medskip

All vector spaces, (multi)linear maps, and tensor products are over a field ${\bf k}$ of characteristic $0$.



\section{Preliminaries}\label{sec-2}
This section first recalls the Hochschild cohomology of associative algebra with coefficients in a bimodule. Then we recall relative Rota-Baxter algebras, which are the main objective in this paper, and their relation with dendriform algebras. Our main references are \cite{loday-cyclic,das-rota,uchino}.

\medskip

Let $(A, \mu)$ be an associative algebra. For any two elements $a, a' \in A$, we write the multiplication $\mu (a, a')$ by $a \cdot a'$. An associative algebra $(A, \mu)$ is often denoted by $A$.

Let $A$ be an associative algebra. An $A$-bimodule is a vector space $M$ together with two bilinear maps (called left and right $A$-actions) $l_M : A \otimes M \rightarrow M,~(a,m) \mapsto a \cdot_M m$ and $r_M : M \otimes A \rightarrow M,~ (m,a) \mapsto m \cdot_M a$ satisfying
\begin{align}\label{bimod}
(a \cdot a') \cdot_M m = a \cdot_M (a' \cdot_M m),  \qquad (a \cdot_M m) \cdot_M a' = a \cdot_M (m \cdot_M a') \quad \text{ and } \quad (m \cdot_M a) \cdot_M a' = m \cdot_M (a \cdot a'),
\end{align}
for $a, a' \in A$ and $m \in M$. An $A$-bimodule as above is simply denoted by $M$ when the left and right $A$-actions are clear from the context. Any associative algebra $A$ is obviously an $A$-bimodule with left and right $A$-actions are given by the algebra multiplication map. We call this the adjoint $A$-bimodule.

Given an $A$-bimodule $M$, the dual space $M^*$ also carries an $A$-bimodule structure with left, and right $A$-actions are given by
\begin{align*}
(a \cdot_{M^*} f)(m) = f (m \cdot_M a) ~~~~ \text{ and } ~~~~ (f \cdot_{M^*} a)(m) = f (a \cdot_M m), ~ \text{ for } a \in A,~ f \in M^*, ~m \in M.
\end{align*}
The above bimodule structure is called the dual $A$-bimodule structure on $M^*$. In particular, the dual of the adjoint $A$-bimodule is called the coadjoint $A$-bimodule.


\medskip

Let $A$ be an associative algebra and $M$ be an $A$-bimodule. Then the direct sum $A \oplus M$ carries an associative algebra structure with the multiplication given by
\begin{align*}
(a,m) \cdot_\ltimes (a', m') = (a \cdot a',~ a \cdot_M m' + m \cdot_M a'),~ \text{ for } (a,m), (a', m') \in A \oplus M.
\end{align*}
The pair $(A \oplus M,\cdot_{\ltimes})$ is called the semidirect product algebra. The semidirect product algebra will play a crucial role in our study. 
\medskip

We will now recall the Hochschild cohomology of an associative algebra with coefficients in a bimodule.
Let $A$ be an associative algebra and $M$ be an $A$-bimodule. For each $k \geq 0$, define an abelian group $C^k_\mathsf{H} (A, M) := \mathrm{Hom} (A^{\otimes k}, M)$. Moreover, there is a differential $\delta_{A,M} : C^k_\mathsf{H} (A, M) \rightarrow C^{k+1}_\mathsf{H} (A, M)$, for $k \geq 0$, given by
\begin{align*}
(\delta_{A,M} f )(a_1, \ldots, a_{k+1}) =~& a_1 \cdot_M f (a_2, \ldots, a_{k+1}) + \sum_{i=1}^k (-1)^i ~ f (a_1, \ldots, a_i \cdot a_{i+1}, \ldots, a_{k+1}) \\
&+ (-1)^{k+1}~ f (a_1, \ldots, a_{k}) \cdot_M a_{k+1} ,
\end{align*}
for $f \in C^k_\mathsf{H}(A, M)$ and $a_1, \ldots, a_{k+1} \in A$. The cohomology groups of the cochain complex $\{ C^\bullet_\mathsf{H}(A, M), \delta_{A,M} \}$ are called the {\bf Hochschild cohomology} groups of $A$ with coefficients in $M$.

\medskip



\begin{defn}
(i) Let $A$ be an associative algebra and $M$ be an $A$-bimodule. A linear map $R : M \rightarrow A$ is said to be a {\bf relative Rota-Baxter operator} (on $M$ over the algebra $A$) if $R$ satisfies
\begin{align}\label{bimod-rr}
R(m ) \cdot R (m') = R \big( R(m) \cdot_M m' + m \cdot_M R(m')  \big), ~ \text{ for } m, m' \in M.
\end{align}

\medskip

(ii) A {\bf relative Rota-Baxter algebra} is a triple $(A, M, R)$ consisting of an associative algebra $A$, an $A$-bimodule $M$ and a relative Rota-Baxter operator $R : M \rightarrow A$.
\end{defn}

\begin{notation}
For some conventional reason, we denote a relative Rota-Baxter algebra by $M \xrightarrow{R} A$ instead of the triple $(A, M, R)$. Note that both of these notations capture the same pieces of information.
\end{notation}

\begin{exam}
Let $\mathcal{A} = ({A}_1 \xrightarrow{d} {A}_0)$ be a $2$-term chain complex. Define $\mathrm{End}(\mathcal{A})$ to be the space of all chain maps on $\mathcal{A}$, i.e.,
\begin{align*}
\mathrm{End}(\mathcal{A}) = \{ (f_0, f_1) ~|~ f_0 \in \mathrm{End}({A}_0),~ f_1 \in \mathrm{End}({A}_1) \text{ and } f_0 \circ d = d \circ f_1 \}.
\end{align*}
Note that $\mathrm{End}(\mathcal{A})$ carries an associative algebra structure with multiplication given by
\begin{align*}
(f_0, g_0) \cdot (g_0, g_1) = (f_0 \circ g_0, f_1 \circ g_1), ~ \text{ for } (f_0, g_0), (g_0, g_1) \in \mathrm{End}(\mathcal{A}).
\end{align*}
Moreover, there is a $\mathrm{End}(\mathcal{A})$-bimodule structure on the space $M = \mathrm{Hom} ({A}_0, \mathrm{ker }~ d)$ given by
\begin{align*}
(f_0, f_1) \cdot_M \phi = f_1 \circ \phi ~~~ \text{ and } ~~~ \phi \cdot_M (f_0, f_1) = \phi \circ f_0, \text{ for } (f_0, f_1) \in  \mathrm{End}(\mathcal{A}), \phi \in M = \mathrm{Hom} ({A}_0, \mathrm{ker }~ d).
\end{align*}
Finally, it is easy to verify that the map $\mathcal{R}: M \rightarrow \mathrm{End}(\mathcal{A})$, $\mathcal{R}(\phi) = (0, \phi \circ d)$ is a relative Rota-Baxter operator. In other words, $ M \xrightarrow{\mathcal{R}} \mathrm{End}(\mathcal{A})$  is a relative Rota-Baxter algebra.
\end{exam}

Some other examples of relative Rota-Baxter algebras can be found in \cite{uchino}.

\begin{defn}\label{rba-mor}
Let $M \xrightarrow{R} A$ and $N \xrightarrow{S} B$ be two relative Rota-Baxter algebras. A {\bf morphism} of relative Rota-Baxter algebras from $M \xrightarrow{R} A$ to $N \xrightarrow{S} B$ is a pair $(\phi, \psi)$ consists of an algebra morphism $\phi : A \rightarrow B$ and a linear map $\psi : M \rightarrow N$ satisfying
\begin{align*}
\psi (a \cdot_M m) = \phi (a) \cdot_N \psi (m), \qquad \psi (m \cdot_M a) = \psi (m) \cdot_N \phi (a) \quad \text{ and } \quad \phi \circ R = S \circ \psi,~ \text{ for } a \in A, m \in M.
\end{align*}
It is called an isomorphism if $\phi$ and $\psi$ are linear isomorphisms.
\end{defn}

Next, we recall dendriform algebras that are intimately connected with relative Rota-Baxter algebras \cite{loday,aguiar-pre,uchino}.

\begin{defn}
A {\bf dendriform algebra} is a vector space $D$ together with two bilinear operations $\prec, \succ : D \otimes D \rightarrow D$ satisfying
\begin{align}
(x \prec y) \prec z =~& x \prec (y \prec z + y \succ z), \label{dend-defn-1}\\
(x \succ y) \prec z =~& x \succ (y \prec z),\\
 (x \prec y + x \succ y) \succ z  =~& x \succ (y \succ z), ~ \text{ for } x, y, z \in D. \label{dend-defn-3}
\end{align}
\end{defn}

Let $(D, \prec, \succ)$ be a dendriform algebra. It follows from (\ref{dend-defn-1})-(\ref{dend-defn-3}) that the new operation $x \ast y =  x \prec y + x \succ y$, for $x, y \in D$, makes $D$ into an associative algebra. We call this the `total associative algebra', denoted by $D_{\mathrm{Tot}}$.



\begin{defn}
Let $(D, \prec, \succ)$ be a dendriform algebra. A {\bf representation} of it consists of a vector space $E$ together with four bilinear maps (called action maps)
\begin{align*}
\prec : D \otimes E \rightarrow E, \qquad \succ : D \otimes E \rightarrow E, \qquad \prec : E \otimes D \rightarrow E \quad \text{ and } \quad \succ : E \otimes D \rightarrow E
\end{align*}
satisfying the $9$ identities where each tuple of $3$ identities correspond to (\ref{dend-defn-1})-(\ref{dend-defn-3}) with exactly one of $x,y,z$ comes from $E$.
\end{defn}

\begin{remark}
(i) Any dendriform algebra $D$ is a representation of itself, where action maps are dendriform operations on $D$. It is called the adjoint representation.

(ii) Let $D$ be a dendriform algebra and $E$ be a representation of it. Then $E$ can be given a $D_\mathrm{Tot}$-bimodule structure with left and right $D_\mathrm{Tot}$-actions
\begin{align*}
 \mathsf{x} \cdot_E e = \mathsf{x} \prec e + \mathsf{x} \succ e ~~~~~ \text{ and } ~~~~~ e \cdot_E \mathsf{x} = e \prec \mathsf{x} + e \succ \mathsf{x}, ~ \text{ for } \mathsf{x} \in D_\mathrm{Tot},~ e \in E.
\end{align*}
\end{remark}

The following result can be found in \cite{uchino,das-rota}.

\begin{prop}\label{prop-rrb-dend}
(i) Let $M \xrightarrow{R} A$ be a relative Rota-Baxter algebra. Then $M$ carries a dendriform algebra structure given by
\begin{align*}
m \prec_R m' = m \cdot_M R (m') \quad ~ \text{ and } ~ \quad m \succ_R m' = R (m) \cdot_M m' , ~ \text{ for } m, m' \in M.
\end{align*}
Hence $(M, \ast_R)$ is an associative algebra, where $m \ast_R m' = R(m) \cdot_M m' + m \cdot_M R(m')$, for $m, m' \in M$. (We denote this total associative algebra by $M_\mathrm{Tot}$). With this associative structure, the map $R : M \rightarrow A$ is a morphism of associative algebras.

(ii) If $(\phi, \psi)$ is a morphism of relative Rota-Baxter algebras from $M \xrightarrow{R} A$ to $N \xrightarrow{S} B$, then the map $\psi : M \rightarrow N$ is a morphism of dendriform algebras.
\end{prop}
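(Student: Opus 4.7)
The plan is to verify, for part (i), the three dendriform axioms directly from the definitions of $\prec_R, \succ_R$ combined with the bimodule axioms (\ref{bimod}) and the relative Rota-Baxter identity (\ref{bimod-rr}). For the first axiom, I would expand
\[
(m_1 \prec_R m_2) \prec_R m_3 = (m_1 \cdot_M R(m_2)) \cdot_M R(m_3),
\]
reassociate using the right bimodule axiom $(m \cdot_M a) \cdot_M a' = m \cdot_M (a \cdot a')$ to get $m_1 \cdot_M (R(m_2) \cdot R(m_3))$, and then apply (\ref{bimod-rr}) inside the argument to obtain
\[
m_1 \cdot_M R\bigl(R(m_2) \cdot_M m_3 + m_2 \cdot_M R(m_3)\bigr) = m_1 \prec_R (m_2 \succ_R m_3) + m_1 \prec_R (m_2 \prec_R m_3),
\]
which is exactly the right-hand side of (\ref{dend-defn-1}). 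The second axiom $(x \succ y) \prec z = x \succ (y \prec z)$ follows purely from the middle bimodule compatibility $(a \cdot_M m) \cdot_M a' = a \cdot_M (m \cdot_M a')$ applied to $R(m_1), m_2, R(m_3)$; no Rota-Baxter identity is needed. The third axiom is the mirror of the first: starting from $m_1 \succ_R (m_2 \succ_R m_3) = R(m_1) \cdot_M (R(m_2) \cdot_M m_3)$, I would use the left bimodule axiom to get $(R(m_1) \cdot R(m_2)) \cdot_M m_3$ and then apply (\ref{bimod-rr}) in the first argument.

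The assertion that $R : M_{\mathrm{Tot}} \to A$ is an algebra morphism is then essentially a restatement of (\ref{bimod-rr}): for $m, m' \in M$,
\[
R(m \ast_R m') = R\bigl(R(m) \cdot_M m' + m \cdot_M R(m')\bigr) = R(m) \cdot R(m').
\]

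For part (ii), given a morphism $(\phi, \psi)$ from $M \xrightarrow{R} A$ to $N \xrightarrow{S} B$, I would compute directly
\[
\psi(m \prec_R m') = \psi\bigl(m \cdot_M R(m')\bigr) = \psi(m) \cdot_N \phi(R(m')) = \psi(m) \cdot_N S(\psi(m')) = \psi(m) \prec_S \psi(m'),
\]
using the compatibility of $\psi$ with the right $A$-action and the intertwining $\phi \circ R = S \circ \psi$ from Definition \ref{rba-mor}. The same computation with the left action handles $\succ_R$.

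There is no real obstacle here; the proof is a sequence of routine manipulations. The only thing to be careful about is keeping track of where exactly each of the three bimodule axioms in (\ref{bimod}) is used versus where the Rota-Baxter identity (\ref{bimod-rr}) is invoked, since axioms (\ref{dend-defn-1}) and (\ref{dend-defn-3}) need (\ref{bimod-rr}) whereas (\ref{dend-defn-2}) uses only bimodule compatibility. I would present the three dendriform identity verifications as three short displayed calculations, then state the morphism claim as an immediate consequence of (\ref{bimod-rr}), and finally dispatch (ii) in one line per operation.
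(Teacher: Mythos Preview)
Your proposal is correct and is precisely the standard direct verification one would expect; the paper itself does not supply a proof of this proposition but simply cites \cite{uchino,das-rota}. One small remark: you refer to the middle dendriform identity by a label \texttt{dend-defn-2}, but in the paper only the first and third identities are labeled, so in the write-up you should refer to it by display or by number rather than by that nonexistent label.
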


\section{Bimodules over relative Rota-Baxter algebras}\label{sec-3}
In this section, we introduce bimodules over relative Rota-Baxter algebras and provide various examples. We  construct the corresponding semidirect product in the context of relative Rota-Baxter algebras. Finally, we show that a bimodule over a relative Rota-Baxter algebra gives rise to a representation of the induced dendriform algebra and find out a relationship between the cohomology of a relative Rota-Baxter operator and the cohomology of the induced dendriform algebra.

\begin{defn}\label{rRB-bimodule}
Let $M \xrightarrow{R} A$ be a relative Rota-Baxter algebra. A {\bf bimodule} over it consists of a tuple $(N \xrightarrow{S} B, l, r)$ in which $N \xrightarrow{S} B$ is a $2$-term chain complex with both $B$ and $N$ are $A$-bimodules, and there are two bilinear maps  $l : M \otimes B \rightarrow N$ and $ r : B \otimes M \rightarrow N$ satisfying
\begin{align}
l (a \cdot_M m, b) = a \cdot_N l(m,b), \qquad l (m \cdot_M a, b) = l (m, a \cdot_B b), \qquad l (m, b \cdot_B a) = l (m, b) \cdot_N a, \label{bimod-l}\\
r ( a \cdot_B b, m ) =  a \cdot_N  r (b, m), \qquad r (b \cdot_B a, m) = r (b, a \cdot_M m), \qquad r (b, m \cdot_M a ) = r (b, m) \cdot_N a,  \label{bimod-r}
\end{align}
and
\begin{align}
R(m) \cdot_B S(n) =~& S \big( R(m) \cdot_N n + l (m, S(n))  \big), \label{bimod-rs}\\
S(n) \cdot_B R(m) =~& S \big(  r (S(n), m) + n \cdot_N R(m) \big), \label{bimod-sr}
\end{align}
for $a \in A,~ b \in B,~ m \in M,~ n \in N$.
\end{defn}

A bimodule as above is often denoted by the complex $N \xrightarrow{S} B$ when the bilinear maps $l,r$ are clear from the context.

\begin{exam}\label{rb-bim}({\bf Rota-Baxter bimodule})
Let $(A, R)$ be a Rota-Baxter algebra. Then it can be considered as a relative Rota-Baxter algebra $A \xrightarrow{R} A$, where $A$ is equipped with the adjoint $A$-bimodule structure. A Rota-Baxter bimodule \cite{GuoLin} over the Rota-Baxter algebra $(A, R)$ consists of a pair $(M, R_M)$ in which $M$ is an $A$-bimodule and $R_M : M \rightarrow M$ is a linear map satisfying
\begin{align*}
R(a) \cdot_M R_M (m) = R_M \big( R(a) \cdot_M m ~+~ a \cdot_M R_M (m)  \big),\\R_M(m) \cdot_M R (a) = R_M \big( R_M(m) \cdot_M a ~+~ m \cdot_M R (a)  \big),
\end{align*}
for $a \in A,~m \in M$. Then it follows that $M \xrightarrow{R_M} M$ is a bimodule over the relative Rota-Baxter algebra $A \xrightarrow{R} A$, where the bilinear maps $l$ and $r$ are respectively left and right $A$-actions on $M$. 

Here we consider an example of a Rota-Baxter bimodule. Let $A$ be an associative algebra. For an element ${\bf r} = r_{(1)} \otimes r_{(2)} \in A \otimes A$, we define the following three elements
\begin{align*}
{\bf r}_{13} {\bf r}_{12} = r_{(1)} \cdot \widetilde{r}_{(1)} \otimes \widetilde{r}_{(2)} \otimes r_{(2)}, \quad
{\bf r}_{12} {\bf r}_{23} = r_{(1)} \otimes r_{(2)} \cdot \widetilde{r}_{(1)} \otimes \widetilde{r}_{(2)}  ~~~ \text{~ and ~ } ~~~
{\bf r}_{23} {\bf r}_{13} = r_{(1)} \otimes \widetilde{r}_{(1)} \otimes \widetilde{r}_{(2)} \cdot r_{(2)} 
\end{align*}
of the tensor product $A \otimes A \otimes A$. Here ${\bf r} = \widetilde{r}_{(1)} \otimes \widetilde{r}_{(2)}$ is an another copy of ${\bf r}$. We consider the following equation, called the associative Yang-Baxter equation (in short AYBE)
\begin{align*}
{\bf r}_{13} {\bf r}_{12} - {\bf r}_{12} {\bf r}_{23} + {\bf r}_{23} {\bf r}_{13} = 0,
\end{align*}
equivalently,
\begin{align}\label{aybe-exp}
r_{(1)} \cdot \widetilde{r}_{(1)} \otimes \widetilde{r}_{(2)} \otimes r_{(2)} ~-~  r_{(1)} \otimes r_{(2)} \cdot \widetilde{r}_{(1)} \otimes \widetilde{r}_{(2)} ~+~ r_{(1)} \otimes \widetilde{r}_{(1)} \otimes \widetilde{r}_{(2)} \cdot r_{(2)} = 0.
\end{align}
A solution of the associative Yang-Baxter equation is called an associative $r$-matrix.

Let $A$ be an associative algebra and ${\bf r} = r_{(1)} \otimes r_{(2)} \in A \otimes A$ be an associative $r$-matrix. We define a map $R: A \rightarrow A$ by $R(a) = r_{(1)} \cdot a \cdot r_{(2)}$, for $a \in A$. Then it has been shown in \cite{aguiar-pre} that $R$ is a Rota-Baxter operator on $A$. In other words, $(A, R)$ is a Rota-Baxter algebra. Further, if $M$ is an $A$-bimodule, we define a map $R_M : M \rightarrow M$ by $R_M (m) = r_{(1)} \cdot_M m  \cdot_M r_{(2)}$, for $m \in M$. Then it can be checked that $(M, R_M)$ is a Rota-Baxter bimodule over the Rota-Baxter algebra $(A, R)$. To see this, we observe that
\begin{align*}
R(a) \cdot_M R_M (m) =~& ( r_{(1)} \cdot a \cdot r_{(2)} ) \cdot_M (r_{(1)} \cdot_M m  \cdot_M r_{(2)}) \\
\stackrel{(\ref{aybe-exp})}{=}~& r_{(1)} \cdot \widetilde{r}_{(1)} \cdot a \cdot \widetilde{r}_{(2)} \cdot_M m \cdot_M r_{(2)} ~+~ r_{(1)} \cdot a \cdot \widetilde{r}_{(1)} \cdot_M m \cdot_M \cdot \widetilde{r}_{(2)} \cdot r_{(2)}\\
=~& R_M \big( R(a) \cdot_M m + a \cdot_M R_M (m)  \big)
\end{align*}
and
\begin{align*}
R_M (m) \cdot_M R(a) =~& (r_{(1)} \cdot_M m  \cdot_M r_{(2)}) \cdot_M  ( r_{(1)} \cdot a \cdot r_{(2)} ) \\
\stackrel{(\ref{aybe-exp})}{=}~& r_{(1)} \cdot \widetilde{r}_{(1)} \cdot_M m \cdot_M \widetilde{r}_{(2)} \cdot a \cdot r_{(2)} ~+~  r_{(1)} \cdot_M m \cdot_M \widetilde{r}_{(1)} \cdot a \cdot \widetilde{r}_{(2)} \cdot r_{(2)} \\
=~& R_M \big( R_M (m) \cdot_M a + m \cdot_M R (a)  \big).
\end{align*}
\end{exam}

\begin{exam}\label{adj-bim} ({\bf Adjoint bimodule}) Any relative Rota-Baxter algebra $M \xrightarrow{R} A$ is a bimodule over itself, where $A$ is equipped with the adjoint $A$-bimodule, $M$ is equipped with the given $A$-bimodule, and the bilinear maps $l$ and $r$ are respectively right and left $A$-actions on $M$.
\end{exam}

\begin{prop} $(${\bf Dual bimodule}$)$ Let $M \xrightarrow{R} A$ be a relative Rota-Baxter algebra and $N \xrightarrow{S} B$ be a bimodule over it. Then $B^* \xrightarrow{-S^*} N^*$ is also a bimodule, where the $A$-bimodule structures on $N^*$ and $B^*$ are given by the dual $A$-bimodules, and the bilinear maps $l^* : M \otimes N^* \rightarrow B^*$ and $r^* : N^* \otimes M \rightarrow B^*$ are given by
\begin{align*}
l^* (m, f_N) (b) = f_N (r (b,m)) ~~~~ \text{ and } ~~~~ r^*(f_N, m) (b) = f_N (l(m,b)), ~\text{ for } m \in M, ~f_N \in N^* \text{ and } b \in B. 
\end{align*}
\end{prop}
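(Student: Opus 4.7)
The plan is to verify, one by one, each of the defining axioms (\ref{bimod-l})--(\ref{bimod-sr}) for the proposed dual data $(B^* \xrightarrow{-S^*} N^*, l^*, r^*)$, in each case by evaluating both sides on an arbitrary test vector from $B$ or $N$ and reducing to the corresponding axiom for the original bimodule $(N \xrightarrow{S} B, l, r)$. The standard dual $A$-bimodule formulas recalled in Section \ref{sec-2} equip $N^*$ and $B^*$ with $A$-bimodule structures, and the stated prescriptions for $l^*$ and $r^*$ visibly give well-defined bilinear maps of the required source and target.

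For the six identities packaged in (\ref{bimod-l})--(\ref{bimod-r}), each axiom for $l^*, r^*$ unpacks, upon evaluation on some $b \in B$, into the statement of one of the other five axioms for $l, r$. For instance, to check $l^*(a \cdot_M m, f_N) = a \cdot_{B^*} l^*(m, f_N)$, the left side evaluated at $b \in B$ is $f_N(r(b, a \cdot_M m))$, while the right side equals $l^*(m, f_N)(b \cdot_B a) = f_N(r(b \cdot_B a, m))$; these agree by the middle identity of (\ref{bimod-r}). The remaining five such identities go through in exactly this style, each being matched with a dual partner in the same group.

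The substantive content sits in the two Rota-Baxter axioms. For the analogue of (\ref{bimod-rs}), I would evaluate the proposed identity $R(m) \cdot_{N^*} (-S^*)(f_B) = (-S^*)\bigl( R(m) \cdot_{B^*} f_B + l^*(m, (-S^*)(f_B)) \bigr)$ at an arbitrary $n \in N$. Unpacking the dual $A$-action on $N^*$ and the definition of $S^*$, the left side becomes $-f_B(S(n \cdot_N R(m)))$, while unpacking $l^*$ on the right gives $-f_B(S(n) \cdot_B R(m)) + f_B(S(r(S(n), m)))$. Cancelling the common sign and rearranging then reduces the desired equality to exactly the original axiom (\ref{bimod-sr}). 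The analogue of (\ref{bimod-sr}) is handled symmetrically and reduces to (\ref{bimod-rs}).

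The main obstacle, beyond careful sign bookkeeping, is to see \emph{why} the dual operator must carry a minus sign. The key observation is that duality swaps the roles of the two Rota-Baxter axioms: the dualized (\ref{bimod-rs}) reduces to the original (\ref{bimod-sr}), and vice versa. The minus sign on $S^*$ is precisely what is needed so that the two sign contributions coming from evaluating $-S^*$ on each side of the dualized axiom cancel, leaving an equation equivalent to the original. Once this pattern is recognised, the entire verification is a sequence of straightforward unravellings of the dual-action formulas.
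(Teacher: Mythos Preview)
Your proposal is correct and follows essentially the same approach as the paper: direct verification of each axiom by evaluating on test vectors and reducing to the corresponding axiom for the original bimodule, with the key observation that duality swaps (\ref{bimod-rs}) and (\ref{bimod-sr}). The paper spells out three of the (\ref{bimod-l}) identities and both Rota-Baxter computations in full, but the strategy and computations are identical to yours.
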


\begin{proof}
We only need to check the identities of (\ref{bimod-l}), (\ref{bimod-r}), (\ref{bimod-rs}) and (\ref{bimod-sr}) for the above dual structures. For any $a \in A, m \in M, f_N \in N^*$ and $b \in B$, we have
\begin{align*}
l^* (a \cdot_M m, f_N) (b) = f_N (r (b, a \cdot_M m)) \stackrel{(\ref{bimod-r})}{=} f_N (r (b \cdot_B a, m)) = l^* (m, f_N) (b \cdot_B a)
= (a \cdot_{B^*} l^*(m, f_N)) (b),
\end{align*}
\begin{align*}
l^* (m \cdot_M a, f_N) (b) = f_N (r (b, m \cdot_M a)) \stackrel{(\ref{bimod-r})}{=} f_N (r (b ,m) \cdot_N a) = (a \cdot_{N^*} f_N) (r (b,m)) 
= l^* (m, a \cdot_{N^*} f_N)(b),
\end{align*}
\begin{align*}
l^* (m, f_N \cdot_{N^*} a)(b) = f_N (a \cdot_N r (b,m)) \stackrel{(\ref{bimod-r})}{=} f_N (r (a \cdot_B b, m)) 
= l^* (m, f_N) (a \cdot_B b)  = (l^* (m, f_N) \cdot_{B^*} a)(b).
\end{align*}
This shows that the identities in (\ref{bimod-l}) are hold. Similarly, one can verify the identities in (\ref{bimod-r}). Finally, for any $m \in M,~ f_B \in B^*$ and $n \in N$, we observe that
\begin{align*}
\big(  R(m) \cdot_{N^*} (-S^*)(f_B) \big) (n) = (- S^*)(f_B) (n \cdot_{N} R(m)) =~& - f_B \big(  S (n \cdot_N R(m)) \big) \\
\stackrel{(\ref{bimod-sr})}{=}~& - f_B \big(    S(n) \cdot_B R(m) - S \circ r (S(n), m) \big)\\
=~& - (R(m) \cdot_{B^*} f_B) (S(n)) ~+~ l^* (m, S^* (f_B)) (S(n)) \\
=~& (-S^*) \big(  R(m) \cdot_{B^*} f_B ~+~ l^* (m, -S^*(f_B))  \big)(n)
\end{align*}
and
\begin{align*}
\big(    (-S^*)(f_B) \cdot_{N^*} R(m) \big) (n) = (-S^*) (f_B) (R(m) \cdot_N n) =~& -f_B \big( S (R(m) \cdot_N n) \big) \\
\stackrel{(\ref{bimod-rs})}{=}~& f_B \big( S \circ l (m, S(n)) - R(m) \cdot_B S(n)    \big) \\
=~& S^*(f_B) (l (m, S(n))) - f_B (R(m) \cdot_B S(n)) \\
=~& r^* (S^*(f_B), m)(S(n)) - (f_B \cdot_{B^*} R(m)) (S(n)) \\
=~& (-S^*) \big( r^*(-S^* (f_B), m) + f_B \cdot_{B^*} R(m)   \big)(n).
\end{align*}
This shows that (\ref{bimod-rs}) and (\ref{bimod-sr}) also holds for the above dual structures. Hence $(B^* \xrightarrow{-S^*} N^*, l^*, r^*)$ is a bimodule over the relative Rota-Baxter algebra $M \xrightarrow{R} A.$
\end{proof}

\begin{exam} \label{coadj-bim} ({\bf Coadjoint bimodule}) This example is dual to the adjoint bimodule given in Example \ref{adj-bim}. Let $M \xrightarrow{R} A$ be a relative Rota-Baxter algebra. Then $(A^* \xrightarrow{-R^*} M^*, l^*, r^*)$ is a bimodule, where the bilinear maps $l^* : M \otimes M^* \rightarrow A^*$ and $r^* : M^* \otimes M \rightarrow\ A^*$ are given by
\begin{align*}
l^* (m, f_M) (a) = f_M (a \cdot_M m) ~~~~ \text{~ and ~} ~~~~ r^* (f_M, m)(a) = f_M (m \cdot_M a), ~ \text{ for } m \in M,~ f_M \in M^*,~ a \in A.
\end{align*}
\end{exam}

\begin{exam} ({\bf Bimodule induced by a morphism}) Let $M \xrightarrow{R} A$ and $N \xrightarrow{S} B$ be two relative Rota-Baxter algebras and $(\phi, \psi)$ be a morphism between them (see Definition \ref{rba-mor}). Then $N \xrightarrow{S} B$ can be given a bimodule structure over the relative Rota-Baxter algebra $M \xrightarrow{R} A$, where the $A$-bimodule structures on $B$ and $N$ are respectively given by
\begin{align*}
a \cdot_B b = \phi (a) \cdot b, \quad b \cdot_B a = b \cdot \phi (a)  ~~ \text{ and } ~~ a \cdot_N n = \phi (a) \cdot_N^B n, \quad n \cdot_N a = n \cdot_N^B \phi (a),
\end{align*}
for $a \in A, b \in B, n \in N$. Here $\cdot_N^B$ denote the left and right $B$-actions on $N$. Finally, the bilinear maps $l : M \otimes B \rightarrow N$ and $r : B \otimes M \rightarrow N$ are given by
\begin{align*}
l (m,b) = \psi (m) \cdot_N^B b  ~~~ \text{ and } ~~~ r (b,m) =  b \cdot_N^B \psi (m), ~~~  \text{ for } m \in M, b \in B.
\end{align*}
\end{exam}

\begin{exam}
Let $A$ be an associative algebra and $M$ be an $A$-bimodule. A linear map $d : A \rightarrow M$ is said to be a derivation on $A$ with values in $M$ if
\begin{align*}
d (a \cdot b) = a \cdot_M d (b) + d(a) \cdot_M b, ~\text{ for } a, b \in A.
\end{align*}
In this case, we call the triple $(A, M, d)$ a relative differential algebra. A bimodule over a relative differential algebra $(A, M, d)$ consists of a quintuple $(B, N, \delta, l, r)$ in which $B$ and $N$ are both $A$-bimodules, $l : M \otimes B \rightarrow N$ and $r : B \otimes M \rightarrow N$ are bilinear maps satisfying the identities in (\ref{bimod-l}),(\ref{bimod-r}), and $\delta : B \rightarrow N$ is a linear map such that followings are hold:
\begin{align*}
\delta (a \cdot_B b) = a \cdot_N \delta(b) + l (d(a), b) \quad \text{ and } \quad \delta ( b \cdot_B a) = r (b, d(a)) +  \delta (b) \cdot_N a, ~ \text{ for } a \in A, b \in B. 
\end{align*}
Let $(A, M, d)$ be a relative differential algebra and $(B, N, \delta, l, r)$ be a bimodule over it. Suppose $\mathrm{dim} ~ M = \mathrm{dim} ~ A$ and $\mathrm{dim} ~ N = \mathrm{dim} ~ B$, and the maps $d, \delta$ are invertible. Then it is easy to see that $M \xrightarrow{d^{-1}} A$ is a relative Rota-Baxter algebra and $(N \xrightarrow{\delta^{-1}} B, l, r)$ is a bimodule over it.
\end{exam}

\medskip

In \cite{uchino} Uchino observed that a relative Rota-Baxter operator lifts to a Rota-Baxter operator on the semidirect product algebra. More precisely, let $A$ be an associative algebra and $M$ be an $A$-bimodule. Consider the semidirect product algebra structure on $A \oplus M$ with the product
\begin{align*}
(a,m) \cdot_\ltimes (a', m') = (a \cdot a', ~a \cdot_M m' + m \cdot_M a'), \text{ for } (a,m), (a', m') \in A \oplus M.
\end{align*}
A linear map $R : M \rightarrow A$ is a relative Rota-Baxter operator if and only if the map $\widehat{R} : A \oplus M \rightarrow A \oplus M$ defined by $\widehat{R}(a, m) = (R(m), 0)$ is a Rota-Baxter operator on the semidirect product algebra $A \oplus M$. In the following, we show that a bimodule over a relative Rota-Baxter algebra can also be lifted.

\begin{prop}\label{prop-lift}
Let $M \xrightarrow{R} A$ be a  relative Rota-Baxter algebra and $N \xrightarrow{S} B$ be a $2$-term chain complex in which $B$ and $N$ are both $A$-bimodules. Suppose there are maps $l : M \otimes B \rightarrow N$ and $r: B \otimes M \rightarrow N$ satisfying the conditions of (\ref{bimod-l}), (\ref{bimod-r}). Then $B \oplus N$ carries an $(A \oplus M)$-bimodule structure with left and right actions
\begin{align*}
(a,m) \cdot_{B \oplus N} (b,n) =~& (a \cdot_B b, ~ a \cdot_N n + l (m, b)),\\
(b,n) \cdot_{B \oplus N} (a,m) =~& ( b \cdot_B a,~ r(b,m) + n \cdot_N a).
\end{align*}
Moreover, 
$(N \xrightarrow{S} B, l, r)$ is a bimodule over the relative Rota-Baxter algebra $M \xrightarrow{R} A$ if and only if the pair $(B \oplus N, \widehat{ S})$ is a Rota-Baxter bimodule over the Rota-Baxter algebra $(A \oplus M, \widehat{R})$. Here $\widehat{S} : B \oplus N \rightarrow B \oplus N$ is the map $\widehat{S}(b,n) = (S(n), 0)$, for $(b,n)\in B \oplus N$.
\end{prop}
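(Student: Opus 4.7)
The plan is to split the proposition into two independent verifications. First I would check that the prescribed left and right actions endow $B \oplus N$ with an $(A \oplus M)$-bimodule structure, and then I would show that the two Rota-Baxter bimodule axioms for $(B \oplus N, \widehat{S})$ over $(A \oplus M, \widehat{R})$ collapse, after a componentwise computation, to the pair of identities (\ref{bimod-rs}) and (\ref{bimod-sr}).

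For the first part, each of the three $(A \oplus M)$-bimodule axioms is an equality of pairs in $B \oplus N$. In every case the $B$-component reduces immediately to the corresponding $A$-bimodule axiom for $B$ (for instance, $(a \cdot a') \cdot_B b = a \cdot_B (a' \cdot_B b)$), while the $N$-component expands into several summands. These $N$-terms split cleanly: the ones involving only the $A$-actions on $N$ follow from the $A$-bimodule axioms for $N$, and the mixed terms involving $l$ or $r$ match precisely one of the three identities in (\ref{bimod-l}) or (\ref{bimod-r}). No hypothesis beyond those already assumed is required.

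For the second part, the crucial observation is that both $\widehat{R}(a,m) = (R(m), 0)$ and $\widehat{S}(b,n) = (S(n), 0)$ are concentrated in the first component. Because $l$ and $r$ are bilinear, every term that would involve the second slot of $\widehat{R}$ or $\widehat{S}$ vanishes, so the $N$-component of each Rota-Baxter bimodule identity collapses to the trivial equality $0 = 0$. A direct unpacking then yields
\begin{align*}
\widehat{R}(a,m) \cdot_{B\oplus N} \widehat{S}(b,n) &= \bigl(R(m) \cdot_B S(n),\, 0\bigr),\\
\widehat{S}\bigl(\widehat{R}(a,m) \cdot_{B\oplus N} (b,n) + (a,m) \cdot_{B\oplus N} \widehat{S}(b,n)\bigr) &= \bigl(S(R(m) \cdot_N n + l(m, S(n))),\, 0\bigr),
\end{align*}
so equality of $B$-components is exactly (\ref{bimod-rs}); note that $a$ and $b$ have dropped out, so quantifying over all $(a,m) \in A \oplus M$ and $(b,n) \in B \oplus N$ is equivalent to quantifying over all $m \in M$ and $n \in N$. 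An identical unpacking of the second Rota-Baxter bimodule axiom reproduces (\ref{bimod-sr}), giving both directions of the equivalence at once.

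The main obstacle will be purely bookkeeping: correctly tracking which summands in the semidirect-product actions are forced to vanish by the zero second components of $\widehat{R}$ and $\widehat{S}$, so that the nontrivial content of the lifted axioms localizes in the $B$-component and matches (\ref{bimod-rs}) and (\ref{bimod-sr}) term for term. No genuinely new algebraic idea should be needed beyond this clean decoupling.
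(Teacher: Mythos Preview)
Your proposal is correct and follows essentially the same approach as the paper: the bimodule axioms are left as a straightforward check, and for the second part you compute $\widehat{R}(a,m) \cdot_{B\oplus N} \widehat{S}(b,n)$ and $\widehat{S}\bigl(\widehat{R}(a,m) \cdot_{B\oplus N} (b,n) + (a,m) \cdot_{B\oplus N} \widehat{S}(b,n)\bigr)$ exactly as the paper does, reducing the first Rota-Baxter bimodule identity to (\ref{bimod-rs}) and the second to (\ref{bimod-sr}). Your additional remark that $a$ and $b$ drop out (so the quantifiers match) is a nice touch the paper leaves implicit.
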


\begin{proof}
The first part is a straightforward calculation. To prove the second part, we observe that
\begin{align*}
\widehat{R}(a,m) \cdot_{B \oplus N} \widehat{S}(b,n) = (R(m), 0) \cdot_{B \oplus N} (S(n), 0) = (R(m) \cdot_B S(n), 0)
\end{align*}
and
\begin{align*}
\widehat{S} \big(  \widehat{R}(a,m) \cdot_{B \oplus N} (b,n) ~+~ (a,m) \cdot_{B \oplus N} \widehat{S}(b,n) \big) 
=~& \widehat{S} \big(  (R(m), 0) \cdot_{B \oplus N} (b, n) ~+~ (a,m) \cdot_{B \oplus N} (S(n), 0) \big) \\
=~& \widehat{S} \big(   R(m) \cdot_B b + a \cdot_B S(n), ~ R(m) \cdot_N n + l (m, S(n)) \big) \\
=~& \big( S ( R(m) \cdot_N n + l (m, S(n)) ), 0   \big).
\end{align*}
This shows that $\widehat{R}(a,m) \cdot_{B \oplus N} \widehat{S}(b,n) = \widehat{S} \big(  \widehat{R}(a,m) \cdot_{B \oplus N} (b,n) ~+~ (a,m) \cdot_{B \oplus N} \widehat{S}(b,n) \big) $ holds if and only if (\ref{bimod-rs}) holds. Similarly, one can verify that $$\widehat{S}(b,n) \cdot_{B \oplus N} \widehat{R}(a,m) = \widehat{S} \big( \widehat{S}(b,n) \cdot_{B \oplus N} (a,m) + (b,n) \cdot_{B \oplus N} \widehat{R}(a,m)  \big)$$ holds if and only if (\ref{bimod-sr}) holds. This implies that $(B \oplus N, \widehat{S})$ is a Rota-Baxter bimodule over the Rota-Baxter algebra $(A \oplus M, \widehat{R})$ if and only if $(N \xrightarrow{S} B, l, r)$ is a bimodule over the relative Rota-Baxter algebra $M \xrightarrow{R} A$.
\end{proof}

\medskip

Let $M \xrightarrow{R} A$ be a relative Rota-Baxter algebra and $(N \xrightarrow{S} B, l, r)$ be a bimodule over it. Consider the direct sum $A \oplus B$ with the semidirect product algebra structure given by
\begin{align*}
(a,b) \cdot_\ltimes (a', b') = (a \cdot a', ~ a \cdot_B b' + b \cdot_B a'), ~ \text{ for } (a, b), (a', b') \in A \oplus B.
\end{align*}
We claim that the space $M \oplus N$ carries a bimodule structure over the associative algebra $A \oplus B$ with left and right actions
\begin{align*}
(a,b) \vartriangleright (m,n) = (a \cdot_M m, ~ a \cdot_N n + r (b,m)) ~~~~  \text{ and } ~~~~ (m,n) \vartriangleleft (a,b) = (m \cdot_M a, ~l (m, b) + n \cdot_N a),
\end{align*}
for $(a,b) \in A \oplus B$ and $(m,n) \in M \oplus N$. To see this, we observe that
\begin{align*}
&\big(  (a,b) \cdot_\ltimes (a', b') \big) \vartriangleright (m,n) - (a,b) \vartriangleright \big( (a', b') \vartriangleright (m,n)   \big) \\
&= (a \cdot a', ~a \cdot_B b' + b \cdot_B a') \vartriangleright (m,n)  ~-~ (a,b) \vartriangleright (a' \cdot_M m, ~ a' \cdot_N n + r (b', m))\\
&= \big(  (a \cdot a') \cdot_M m - a \cdot_M (a' \cdot_M m), ~ (a \cdot a') \cdot_N n + r (a \cdot_B b', m) + r (b \cdot_B a', m) \\
 & \quad - a \cdot_N (a' \cdot_N n) - a \cdot_N r (b', m) - r (b, a' \cdot_M m)   \big) \stackrel{(\ref{bimod}), (\ref{bimod-r})}{=} 0.
\end{align*}
This proves that $\big(  (a,b) \cdot_\ltimes (a', b') \big) \vartriangleright (m,n) = (a,b) \vartriangleright \big( (a', b') \vartriangleright (m,n)   \big)$. Similarly, we have
\begin{align*}
& \big( (a,b) \vartriangleright (m,n)    \big)\vartriangleleft (a', b') - (a, b) \vartriangleright \big(   (m,n) \vartriangleleft (a', b') \big)\\
& = (a \cdot_M m, ~ a \cdot_N n + r (b, m)) \vartriangleleft (a', b') ~-~ (a, b) \vartriangleright (m \cdot_M a', ~ l (m, b') + n \cdot_N a' )\\
& = \big( (a \cdot_M m) \cdot_M a' - a \cdot_M (m \cdot_M a'), ~ l (a \cdot_M m, b') + (a \cdot_N n) \cdot_N a' + r (b, m) \cdot_N a' \\
 & \quad - a \cdot_N l (m, b')  - a \cdot_N (n \cdot_N a') - r (b, m \cdot_M a')   \big) \stackrel{(\ref{bimod}), (\ref{bimod-l}), (\ref{bimod-r})}{=} 0
\end{align*}
and
\begin{align*}
& \big( (m,n) \vartriangleleft (a,b)  \big) \vartriangleleft (a', b') -  (m,n) \vartriangleleft \big( (a, b) \cdot_\ltimes (a', b') \big)\\
& = (m \cdot_M a, ~ l (m, b) + n \cdot_N a) \vartriangleleft (a', b') ~-~ (m,n) \vartriangleleft (a \cdot a',~ a \cdot_B b' + b \cdot_B a')\\
& = \big(  (m \cdot_M a) \cdot_M a' - m \cdot_M (a \cdot a'), ~ l (m \cdot_M a, b') + l (m, b) \cdot_N a' + (n \cdot_N a) \cdot_N a' \\
& \quad  - l (m, a \cdot_B b') - l(m, b \cdot_B a') - n \cdot_N (a \cdot a')  \big) \stackrel{(\ref{bimod}), (\ref{bimod-l})}{=} 0.
\end{align*}
Thus, our claim holds. With all these notations, we are now in a position to construct the semidirect product relative Rota-Baxter algebra.

\begin{prop}\label{semi-rrb} {\bf (Semidirect product)} Let $M \xrightarrow{R} A$ be a relative Rota-Baxter algebra and $(N \xrightarrow{S} B, l, r)$ be a bimodule over it. Then $M \oplus N \xrightarrow{ R \oplus S} A \oplus B$ is a relative Rota-Baxter algebra.
\end{prop}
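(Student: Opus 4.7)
The plan is to treat the proposition as the last remaining verification in a three-part construction. The associative algebra structure on $A \oplus B$ (the semidirect product $\cdot_\ltimes$) is already fixed, and the $(A \oplus B)$-bimodule structure on $M \oplus N$ given by $\vartriangleright$ and $\vartriangleleft$ has just been checked in the discussion immediately preceding the proposition. So the only thing left to do is to show that the linear map $R \oplus S : M \oplus N \to A \oplus B$ satisfies the relative Rota-Baxter identity (\ref{bimod-rr}) with respect to these structures.

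I would unfold that identity on elements $(m,n), (m',n') \in M \oplus N$ and split it into its $A$- and $B$-components. The left-hand side is
$$(R(m), S(n)) \cdot_\ltimes (R(m'), S(n')) = \bigl( R(m)\cdot R(m'),\; R(m) \cdot_B S(n') + S(n) \cdot_B R(m') \bigr),$$
while the right-hand side, after expanding $\vartriangleright$ and $\vartriangleleft$, is $(R \oplus S)$ applied to
$$\bigl( R(m) \cdot_M m' + m \cdot_M R(m'),\; R(m) \cdot_N n' + r(S(n), m') + l(m, S(n')) + n \cdot_N R(m') \bigr).$$
The $A$-component equality is precisely the relative Rota-Baxter identity (\ref{bimod-rr}) for $R$ on the bimodule $M$ over $A$, so there is nothing to prove there.

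The $B$-component is the substantive step: I need
$$R(m) \cdot_B S(n') + S(n) \cdot_B R(m') = S\bigl( R(m) \cdot_N n' + l(m, S(n')) \bigr) + S\bigl( r(S(n), m') + n \cdot_N R(m') \bigr).$$
Applying (\ref{bimod-rs}) to the first summand on the left and (\ref{bimod-sr}) to the second summand on the left gives exactly the two $S(\cdot)$ terms on the right. This closes the proof.

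I do not expect any obstacle; both axioms of a bimodule that involve the operator $S$, namely (\ref{bimod-rs}) and (\ref{bimod-sr}), were precisely designed so that the two ``cross terms'' $R(m) \cdot_B S(n')$ and $S(n) \cdot_B R(m')$ that appear on the left-hand side can be absorbed into $S$ on the right. The entire argument is symbol-pushing and relies on no ingredient beyond (\ref{bimod-rr}), (\ref{bimod-rs}), (\ref{bimod-sr}), together with the bimodule structure on $M \oplus N$ that has just been verified.
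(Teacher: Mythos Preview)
Your proof is correct and follows essentially the same approach as the paper: both compute $(R\oplus S)(m,n)\cdot_\ltimes(R\oplus S)(m',n')$, split into the $A$- and $B$-components, and invoke (\ref{bimod-rr}) for the first and (\ref{bimod-rs}), (\ref{bimod-sr}) for the second. The paper presents the chain of equalities in one direction rather than matching two sides, but the content is identical.
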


\begin{proof}
For any $(m,n), (m', n') \in M \oplus N$, we have
\begin{align*}
&(R \oplus S) (m,n) \cdot_\ltimes (R \oplus S) (m',n') \\
&= (R(m), S(n)) \cdot_\ltimes (R(m') , S(n')) \\
&= \big( R(m) \cdot R(m'), ~ R(m) \cdot_B S(n') + S(n) \cdot_B R(m')  \big) \\
&\stackrel{(\ref{bimod-rr}), (\ref{bimod-rs}), (\ref{bimod-sr})}{=} \big(  R \big( R(m) \cdot_M m' + m \cdot_M R(m')   \big),~ S\big( R(m) \cdot_N n' + l (m, S(n')) \big) + S \big( r (S(n), m') + n \cdot_N R(m')   \big)   \big)\\
&= (R \oplus S) \big( R(m) \cdot_M m' + m \cdot_M R(m'), ~ R(m) \cdot_N n' + r (S(n), m') + l(m, S(n')) + n \cdot_N R(m')  \big) \\
&= (R \oplus S) \big(  (R(m), S(n)) \vartriangleright (m', n')  + (m,n) \vartriangleleft (R(m'), S(n')) \big) \\
&= (R \oplus S) \bigg( \big( (R \oplus S)(m,n)  \big) \vartriangleright (m',n') + (m,n) \vartriangleleft \big(   (R \oplus S)(m',n') \big)  \bigg).
\end{align*}
This shows that $R \oplus S : M \oplus N \rightarrow A \oplus B$ is a relative Rota-Baxter operator. Hence the result follows.
\end{proof}

\medskip

\noindent {\bf Relations with dendriform algebras.} Here, we show that a bimodule over a relative Rota-Baxter algebra gives rise to a representation of the induced dendriform algebra. Furthermore, we find a converse of this result. In the end, we discuss the relationship between the cohomology (with coefficients) of a relative Rota-Baxter operator and the cohomology (with coefficients) of the induced dendriform algebra.

The proof of the following result is straightforward, hence we omit the details.

\begin{prop}\label{induced-dend-repn}
Let $M \xrightarrow{R} A$ be a relative Rota-Baxter algebra and $(N \xrightarrow{S} B, l, r)$ be a bimodule over it. Then $N $ is a {representation of the induced} dendriform algebra $(M , \prec_R, \succ_R)$ with action maps given by
\begin{align*}
m \prec n =  l (m, S(n)), \quad m \succ n = R(m) \cdot_N n, \quad n \prec m =  n \cdot_N R(m) ~~~ \text{ and } ~~~ n \succ m = r (S(n), m).
\end{align*}
\end{prop}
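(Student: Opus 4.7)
My plan is to derive all nine representation identities in one stroke by exploiting the semidirect product. By Proposition \ref{semi-rrb} the data $(N \xrightarrow{S} B, l, r)$ produce a relative Rota-Baxter algebra
\[
M \oplus N \xrightarrow{R \oplus S} A \oplus B,
\]
and Proposition \ref{prop-rrb-dend}(i) then endows $M \oplus N$ with an induced dendriform structure $(\prec_{R \oplus S}, \succ_{R \oplus S})$. The nine identities required for $N$ to be a representation of $(M, \prec_R, \succ_R)$ are precisely the three dendriform axioms on $M \oplus N$ specialized to triples in which exactly one entry lies in $\{0\} \oplus N$, so it suffices to read off those specializations.

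First I would unpack the two induced operations using the bimodule actions $\vartriangleright$, $\vartriangleleft$ displayed just before Proposition \ref{semi-rrb}. A direct substitution yields
\[
(m,n) \prec_{R \oplus S} (m',n') = \bigl( m \cdot_M R(m'),\; l(m, S(n')) + n \cdot_N R(m') \bigr),
\]
\[
(m,n) \succ_{R \oplus S} (m',n') = \bigl( R(m) \cdot_M m',\; R(m) \cdot_N n' + r(S(n), m') \bigr).
\]
Restricting to inputs in $M \oplus \{0\}$ recovers $(\prec_R, \succ_R)$, so $M \oplus \{0\}$ is a dendriform subalgebra; feeding one pure-$M$ and one pure-$N$ argument reproduces exactly the four candidate actions $l(m, S(n))$, $R(m) \cdot_N n$, $n \cdot_N R(m)$, $r(S(n), m)$ from the statement.

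Now for each of the dendriform axioms (\ref{dend-defn-1})--(\ref{dend-defn-3}) on $M \oplus N$, I would plug in $(x,y,z)$ so that exactly one argument lies in $\{0\} \oplus N$ while the other two lie in $M \oplus \{0\}$. Because both $\prec_{R \oplus S}$ and $\succ_{R \oplus S}$ send such a pair into $\{0\} \oplus N$ and kill the first coordinate, the axiom collapses to a single identity among the four proposed actions on $N$. Running through the three axioms and the three possible positions for the $N$-argument yields the nine required representation identities.

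The only structural point to monitor is that the Rota-Baxter identity for $R \oplus S$ proved in Proposition \ref{semi-rrb} already absorbed the compatibilities (\ref{bimod-rs})--(\ref{bimod-sr}), while (\ref{bimod-l})--(\ref{bimod-r}) were used to make $M \oplus N$ into an $A \oplus B$-bimodule; consequently no additional axiom needs to be invoked here. I expect the main obstacle to be purely bookkeeping — tracking which of the three slots is occupied by $n$ in each of the nine instances — rather than anything substantive, which is consistent with the author's remark that the verification is straightforward.
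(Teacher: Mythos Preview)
Your argument is correct. The paper omits the proof entirely, remarking only that it is ``straightforward''---presumably meaning a direct verification of the nine identities using (\ref{bimod-l})--(\ref{bimod-sr}). Your route through the semidirect product is a cleaner and more conceptual alternative: the formulas you extract for $\prec_{R\oplus S}$ and $\succ_{R\oplus S}$ are exactly those the authors themselves write down later in the proof of Proposition~\ref{coho-coho-rel}, and once you have them the nine representation identities are literally the $N$-components of the three dendriform axioms on $M\oplus N$ with one argument in $\{0\}\oplus N$. The only point worth making explicit is that a product of two elements of $\{0\}\oplus N$ vanishes (since $R(0)=0$ and $S(0)=0$), which confirms that $\{0\}\oplus N$ really is an ideal in the representation-theoretic sense and not merely a subspace; this is implicit in your argument but would be good to state. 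Otherwise there is nothing to add.
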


\begin{remark}
Let  $M \xrightarrow{R} A$ be a relative Rota-Baxter algebra and consider the adjoint bimodule $M \xrightarrow{R} A$ given in Example \ref{adj-bim}. Then the representation of the dendriform algebra $(M, \prec_R, \succ_R)$ on the vector space $M$ given in the above proposition coincides with the adjoint representation.
\end{remark}

\medskip

In Proposition \ref{induced-dend-repn}, we observed that a  bimodule over a relative Rota-Baxter algebra gives rise to a representation of the induced dendriform algebra. In the following, we will discuss the converse of this result. Let $(D, \prec, \succ)$ be a dendriform algebra. Consider the total associative algebra $D_{\mathrm{Tot}}$. Then there is a $D_{\mathrm{Tot}}$-bimodule structure on $D$ with left and right actions
\begin{align*}
\mathsf{x} \cdot x := \mathsf{x} \succ x ~~~~ \text{ and } ~~~~ x \cdot \mathsf{x} := x \prec \mathsf{x}, ~ \text{ for } \mathsf{x} \in D_{\mathrm{Tot}}, x \in D.
\end{align*}
With this bimodule, it is easy to verify that $D \xrightarrow{\mathrm{id}_D} D_\mathrm{Tot}$ is a relative Rota-Baxter algebra. Moreover, the induced dendriform structure on $D$ coincides with the given one.

Next, we take a representation $E$ of the dendriform algebra $(D, \prec, \succ)$. Then there are two $D_\mathrm{Tot}$-bimodule structures on $E$. The first one is given by
\begin{align*}
\mathsf{x} \cdot^1_E e := \mathsf{x} \prec e + \mathsf{x} \succ e ~~~ \text{ and } ~~~ e \cdot^1_E \mathsf{x} = e \prec \mathsf{x} + e \succ \mathsf{x},
\end{align*}
the second one is given by
\begin{align*}
\mathsf{x} \cdot^2_E e = \mathsf{x} \succ e ~~~ \text{ and } ~~~ e \cdot^2_E \mathsf{x} = e \prec \mathsf{x},
\end{align*}
for $\mathsf{x} \in D_\mathrm{Tot}$ and $e \in E$.
We denote the first $D_\mathrm{Tot}$-bimodule by $E_\mathrm{Tot}$ and the second one simply by $E$. Then it can be checked that $(E \xrightarrow{\mathrm{id}_E} E_\mathrm{Tot}, l, r)$ is a bimodule over the relative Rota-Baxter algebra $D \xrightarrow{\mathrm{id}_D} D_\mathrm{Tot}$, where the bilinear maps $l : D \otimes E_\mathrm{Tot} \rightarrow E$ and $r: E_\mathrm{Tot} \otimes D \rightarrow E$  are given by 
\begin{align*}
l (x, {e}) = x \prec {e} ~~~ \text{ and } ~~~ r({e}, x) = {e} \succ x, \text{ for } x \in D, {e} \in E_\mathrm{Tot}.
\end{align*}
Moreover, the induced {representation} of the dendriform algebra $D$ on the vector space $E$ coincides with the given one.

\begin{remark}\label{rmk-lift}
By lifting the relative Rota-Baxter algebra $D \xrightarrow{\mathrm{id}_D} D_\mathrm{Tot}$ and its bimodule $(E \xrightarrow{\mathrm{id}_E} E_\mathrm{Tot}, l, r)$, we obtain the Rota-Baxter algebra $(D_\mathrm{Tot} \oplus D, \widehat{\mathrm{id}_{D}})$ and its Rota-Baxter bimodule $(E_\mathrm{Tot} \oplus E, \widehat{\mathrm{id}_{E}})$. See Proposition \ref{prop-lift} for details. It is easy to see that the inclusion $D \hookrightarrow D_\mathrm{Tot} \oplus D$ is an embedding of the dendriform algebra $D$ into the Rota-Baxter algebra $(D_\mathrm{Tot} \oplus D, \widehat{\mathrm{id}_{D}})$. In other words, the inclusion map $D \hookrightarrow D_\mathrm{Tot} \oplus D$ is a morphism of dendriform algebras, where $D_\mathrm{Tot} \oplus D$ is equipped with the dendriform algebra induced from the Rota-Baxter operator $\widehat{\mathrm{id}_{D}}$. Similarly, the inclusion $E \hookrightarrow E_\mathrm{Tot} \oplus E$ is an embedding of the {representation} $E$ (of the dendriform algebra $D$) into the Rota-Baxter bimodule $(E_\mathrm{Tot} \oplus E, \widehat{\mathrm{id}_{E}})$ of the Rota-Baxter algebra $(D_\mathrm{Tot} \oplus D, \widehat{\mathrm{id}_{D}})$.
\end{remark}

\medskip

In the following, we find some cohomological relations with dendriform algebras. 
Let $M \xrightarrow{R} A$ be a relative Rota-Baxter algebra. Then we have seen in Proposition \ref{prop-rrb-dend} that $M_\mathrm{Tot} = (M, \ast_R)$ is an associative algebra, where
\begin{align*}
m \ast_R m' = R(m ) \cdot_M m' + m \cdot_M R(m'), ~ \text{ for } m, m' \in M. 
\end{align*}
Let $(N \xrightarrow{S} B, l, r)$ be a bimodule over the relative Rota-Baxter algebra $M \xrightarrow{R} A$. We define bilinear maps $\triangleright_B : M \otimes B \rightarrow B$ and $\triangleleft_B ~: B \otimes M \rightarrow B$ by
\begin{align*}
m \triangleright_B b = R(m ) \cdot_B b - S (l(m,b)) ~~~ \text{and } ~~~ b \triangleleft_B m = b \cdot_B R(m) - S( r (b,m)), ~ \text{ for } m \in M,~ b \in B.
\end{align*}
Then we have the following.

\begin{prop}\label{denote-bimod}
With the above notations, $B$ is a bimodule over the associative algebra $M_\mathrm{Tot}$. (We denote this bimodule by $B_{\triangleright, \triangleleft}$).
\end{prop}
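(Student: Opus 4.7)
The plan is to verify directly the three $M_{\mathrm{Tot}}$-bimodule axioms for $B$:
\begin{align*}
(m \ast_R m') \triangleright_B b = m \triangleright_B (m' \triangleright_B b), \quad (m \triangleright_B b) \triangleleft_B m' = m \triangleright_B (b \triangleleft_B m'), \quad (b \triangleleft_B m) \triangleleft_B m' = b \triangleleft_B (m \ast_R m').
\end{align*}
The key inputs are: the relative Rota-Baxter identity (\ref{bimod-rr}), which tells us $R(m \ast_R m') = R(m) \cdot R(m')$ (i.e.\ $R : M_{\mathrm{Tot}} \to A$ is an algebra map, as recorded in Proposition \ref{prop-rrb-dend}); the $A$-bimodule axioms on $B$ from (\ref{bimod}); the compatibilities (\ref{bimod-l}) and (\ref{bimod-r}) that let us move $A$-actions past $l$ and $r$; and crucially the ``mixed'' identities (\ref{bimod-rs}) and (\ref{bimod-sr}) that let us turn $R(m) \cdot_B S(n)$ and $S(n) \cdot_B R(m)$ into terms in the image of $S$.

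For the first axiom, I would expand the left-hand side as $R(m) \cdot_B R(m') \cdot_B b - S\bigl(l(R(m)\cdot_M m' + m \cdot_M R(m'), b)\bigr)$, and then apply (\ref{bimod-l}) inside the argument of $S$ to rewrite $l(R(m)\cdot_M m',b) = R(m)\cdot_N l(m',b)$ and $l(m\cdot_M R(m'),b) = l(m,R(m')\cdot_B b)$. Expanding the right-hand side $m \triangleright_B (m' \triangleright_B b)$ produces four terms, one of which is $R(m) \cdot_B S(l(m',b))$. Applying (\ref{bimod-rs}) turns this into $S(R(m)\cdot_N l(m',b)) + S(l(m,S(l(m',b))))$, and the latter summand cancels exactly against $-S(l(m,S(l(m',b))))$ produced by the $-S(l(m,\cdot))$ of the outer $\triangleright_B$. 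After cancellation the two sides coincide. The third axiom is symmetric: it uses $R(m \ast_R m') = R(m) \cdot R(m')$, then (\ref{bimod-r}) to rewrite $r(b \cdot_B R(m), m') = r(b, R(m)\cdot_M m')$ and $r(b,m)\cdot_N R(m') = r(b, m \cdot_M R(m'))$, while the nested $S(r(S(r(b,m)),m'))$ term is absorbed by (\ref{bimod-sr}). The middle (bi-associativity) axiom uses (\ref{bimod-l}) on one side and (\ref{bimod-r}) together with (\ref{bimod-rs}) and (\ref{bimod-sr}) to handle the two mixed $S$-terms that arise.

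The only real obstacle is the bookkeeping: each expansion produces up to four terms, and the quadratic-in-$S$ terms must be cancelled by exactly the right application of (\ref{bimod-rs}) or (\ref{bimod-sr}). The design of the actions $m \triangleright_B b = R(m)\cdot_B b - S(l(m,b))$ and $b \triangleleft_B m = b\cdot_B R(m) - S(r(b,m))$ makes these cancellations automatic, which is no accident: as an alternative proof, one can lift to the Rota-Baxter algebra $(A\oplus M, \widehat{R})$ with Rota-Baxter bimodule $(B\oplus N, \widehat{S})$ via Proposition \ref{prop-lift}, invoke the standard fact that a Rota-Baxter bimodule becomes a bimodule over the descendant algebra via $x \triangleright w = \widehat{R}(x)\cdot w - \widehat{S}(x\cdot w)$, and then restrict to the subalgebra $\{0\}\oplus M \subseteq A\oplus M$ and the subspace $B\oplus\{0\}\subseteq B\oplus N$ to recover precisely $(B_{\triangleright,\triangleleft},\, M_{\mathrm{Tot}})$.
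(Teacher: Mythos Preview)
Your proposal is correct and follows exactly the paper's approach: a direct expansion of each of the three bimodule identities, using $R(m\ast_R m')=R(m)\cdot R(m')$ together with (\ref{bimod-l})--(\ref{bimod-r}) to slide $A$-actions through $l$ and $r$, and then (\ref{bimod-rs})--(\ref{bimod-sr}) to absorb the terms of the form $R(m)\cdot_B S(-)$ or $S(-)\cdot_B R(m)$ so that the nested $S$-terms cancel. Your closing remark about lifting via Proposition~\ref{prop-lift} and restricting to $\{0\}\oplus M$ and $B\oplus\{0\}$ is a valid conceptual alternative that the paper does not pursue.
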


\begin{proof}
For any $m, m' \in M$ and $b \in B$, we have
\begin{align*}
&( m \ast_R m' ) \triangleright_B b - m \triangleright_B (m' \triangleright_B b) \\
&= R (m \ast_R m') \cdot_B b - S (l (m \ast_R m', b)) - R(m) \cdot_B (m ' \triangleright_B b)  + S (l (m, m' \triangleright_B b)) \\
&= {(R(m) \cdot R(m')) \cdot_B b}  - S \big( l ( R(m) \cdot_M m', b) \big) - {S \big( l ( m \cdot_M R(m'), b) \big)}  \\
& ~~~ - {R(m) \cdot_B (R(m') \cdot_B b)} + R(m) \cdot_B S (l (m', b))
+ {S \big(l (m, R(m') \cdot_B b) \big)} - S \big( l (m, S \circ l (m', b))  \big) \\
&=  - S \big( l ( R(m) \cdot_M m', b) \big)  + S \big(  R(m) \cdot_M l(m', b) + l (m, S \circ l (m', b))    \big)  - S \big( l (m, S \circ l (m', b))  \big)  = 0.
\end{align*}
Similarly,
\begin{align*}
&(m \triangleright_B b) \triangleleft_B m' - m \triangleright_B (b \triangleleft_B m') \\
&= (m \triangleright_B b) \cdot_B R(m') - S \big( r (m \triangleright_B b, m')\big) - R(m) \cdot_B (b \triangleleft_B m') + S \big( l (m, b \triangleleft_B m')  \big) \\
&= {(R(m) \cdot_B b) \cdot_B R(m')} - S (l(m, b)) \cdot_B R(m') - S \big( r (R(m) \cdot_B b, m')  \big) + S \big( r (S \circ l (m, b), m')  \big) \\
& ~~~ - {R(m) \cdot_B (b \cdot_B R(m'))} + R(m) \cdot_B S(r (b, m')) + S \big(  l (m, b \cdot_B R(m'))  \big) - S \big(  l (m, S \circ r (b,m'))  \big) \\
&= - S \big(  r (  S \circ l (m,b), m') ~+~ l(m,b) \cdot_N R(m')  \big) - S \big( r (R(m) \cdot_B b, m')  \big) + S \big( r (S \circ l (m, b), m')  \big) \\
& ~~~ + S \big( R(m) \cdot_N r (b, m') ~+~ l (m, S \circ r (b, m'))  \big) + S \big(  l (m, b \cdot_B R(m'))  \big) - S \big(  l (m, S \circ r (b,m'))  \big) = 0
\end{align*}
and
\begin{align*}
&(b \triangleleft_B m) \triangleleft_B m' - b \triangleleft_B (m \ast_R m') \\
& = (b \triangleleft_B m) \cdot_B R(m') - S \big( r (b \triangleleft_B m, m') \big) - b \cdot_B R (m \ast_R m') + S \big( r (b, m \ast_R m') \big)  \\
& = {(b \cdot_B R(m)) \cdot_B R(m')} - S (r(b,m)) \cdot_B R(m') - {S \big( r (b \cdot_B R(m), m')  \big)} + S \big( r (S \circ r (b,m), m')      \big) \\
& ~~~ - {b \cdot_B (R(m) \cdot R(m'))} + {S \big( r (b, R(m) \cdot_M m')  \big)} + S \big( r (b, m \cdot_M R(m'))   \big) \\
&= - S \big(  r ( S \circ r (b,m), m') + r(b,m) \cdot_N R(m')   \big)  + S \big( r (S \circ r (b,m), m')      \big) + S \big( r (b, m \cdot_M R(m'))   \big) = 0.
\end{align*}
This proves the result.
\end{proof}

It follows from the above proposition that we may consider the Hochschild cohomology of the associative algebra $M_\mathrm{Tot}$ with coefficients in the bimodule $B_{\triangleright, \triangleleft}$. More precisely, we define the cochain complex $\{ C^\bullet_\mathsf{H} (M_\mathrm{Tot}, B_{\triangleright, \triangleleft}), \delta_{M,B} \}$, where $C^k_\mathsf{H} (M_\mathrm{Tot}, B_{\triangleright, \triangleleft}) = \mathrm{Hom} (M^{\otimes k}, B)$, for $k \geq 0$, and $\delta_{M,B} : C^k_\mathsf{H} (M_\mathrm{Tot}, B_{\triangleright, \triangleleft}) \rightarrow C^{k+1}_\mathsf{H} (M_\mathrm{Tot}, B_{\triangleright, \triangleleft})$ given by
\begin{align*}
&(\delta_{M,B} f)(m_1,\ldots,m_{k+1})\\
&=m_1 \triangleright_B f(m_2,\ldots,m_{k+1})+\sum_{i=1}^k (-1)^i~ f(m_1,\ldots,m_i \ast_R m_{i+1},\ldots,m_{k+1})+(-1)^{k+1} ~f(m_1,\ldots,m_{k}) \triangleleft_B m_{k+1}\\
&=R(m_1)\cdot_B f(m_2,\ldots,m_{k+1})-S(l(m_1,f(m_2,\ldots,m_{k+1})))\\
&~~+\sum_{i=1}^k (-1)^i f(m_1,\ldots,R(m_i)\cdot_M m_{i+1},\ldots,m_{k+1})+\sum_{i=1}^k (-1)^i~ f(m_1,\ldots,m_i\cdot_M R(m_{i+1}),\ldots,m_{k+1})\\
&~~+(-1)^{k+1}\big(f(m_1,\ldots,m_{k})\cdot_B R(m_{k+1})-S(r(f(m_1,\ldots,m_{k}),m_{k+1})\big).
\end{align*}
We denote the corresponding cohomology groups by $H^\bullet_\mathsf{H} (M_\mathrm{Tot}, B_{\triangleright, \triangleleft})$.

\begin{remark}\label{rem-fin}
Let $M \xrightarrow{R} A$ be a relative Rota-Baxter algebra and consider the adjoint bimodule $M \xrightarrow{R} A$ (see Example \ref{adj-bim}). Then it follows from the previous discussion that the vector space $A$ carries a bimodule structure over the associative algebra $M_\mathrm{Tot}$. This bimodule has considered first by Uchino \cite{uchino} and further studied in \cite{das-rota}. The corresponding Hochschild cohomology groups are called the cohomology of the relative Rota-Baxter operator $R$.
\end{remark}

Given a relative Rota-Baxter algebra $M \xrightarrow{R} A$, in \cite{das-rota} the author finds a connection between the cohomology of the relative Rota-Baxter operator $R$ and the cohomology of the induced dendriform algebra $(M, \prec_R, \succ_R)$ with coefficients in the adjoint representation. We will extend this result in the context of relative Rota-Baxter algebras equipped with bimodules.

We first recall the cohomology of a dendriform algebra with coefficients in a representation \cite{das-dend}. Let $(D, \prec, \succ)$ be a dendriform algebra. Consider the associative algebra $D_\mathrm{Tot} \oplus D$  given in Remark \ref{rmk-lift}. Note that the associative multiplication on $D_\mathrm{Tot} \oplus D$  is given by
\begin{align*}
(\mathsf{x} , x) \cdot (\mathsf{y}, y) = (\mathsf{x} \ast \mathsf{y},~ \mathsf{x} \succ y + x \prec \mathsf{y}), \text{ for } (\mathsf{x} , x), (\mathsf{y}, y) \in D_\mathrm{Tot} \oplus D.
\end{align*}
If $E$ is a representation of the dendriform algebra $D$, then $E_\mathrm{Tot} \oplus E$ is a bimodule over the associative algebra $D_\mathrm{Tot} \oplus D$ with left and right actions given by 
\begin{align*}
(\mathsf{x}, x) \cdot (\mathsf{e}, e) = (\mathsf{x} \prec \mathsf{e} + \mathsf{x} \succ \mathsf{e},~ \mathsf{x} \succ e + x \prec \mathsf{e})
 ~\text{ and }~ (\mathsf{e}, e) \cdot (\mathsf{x}, x) = (\mathsf{e} \prec \mathsf{x} + \mathsf{e} \succ \mathsf{x},~ \mathsf{e} \succ x + e \prec \mathsf{x}),
\end{align*}
for $(\mathsf{x}, x) \in D_\mathrm{Tot} \oplus D$ and $(\mathsf{e}, e) \in E_\mathrm{Tot} \oplus E$.
Therefore, we may define the Hochschild cochain complex $\{ C^\bullet_\mathsf{H} ( D_\mathrm{Tot} \oplus D, E_\mathrm{Tot} \oplus E ), \delta_{D_\mathrm{Tot} \oplus D, E_\mathrm{Tot} \oplus E} \}$. Using this complex, we will now define the cochain complex of the dendriform algebra $D$ with coefficients in the representation $E$. We first need the following notations. For each $k \geq 1$, let $C_k$ be the set of first $k$ natural numbers. For convenience, we write $C_k = \{ [1], [2], \ldots, [k] \}$. We define the $k$-th cochain group as
\begin{align*}
C^k_\mathsf{D} (D, E) := \mathrm{Hom}({\bf k}[C_k] \otimes D^{\otimes k}, E), ~ \text{ for } k \geq 1.
\end{align*} 
For any $f \in C^k_\mathsf{D} (D, E)$, there is an element $\widehat{f} \in C^k_\mathsf{H} ( D_\mathrm{Tot} \oplus D, E_\mathrm{Tot} \oplus E )$ given by
\begin{align*}
\widehat{f} \big( ({x}_1, 0), \ldots, ({x}_k, 0)  \big) =~& \big( \sum_{i=1}^k f([i]; x_1, \ldots, x_k), 0    \big),\\
\widehat{f} \big( (x_1, 0), \ldots, (0, x_i), \ldots, (x_k, 0)   \big) =~& \big(   0, f([i]; x_1, \ldots, x_k) \big), \\
\widehat{f} \big(  (x_1, 0), \ldots, (0, x_i), \ldots, (0, x_j), \ldots, (x_k, 0)  \big) =~& (0,0).
\end{align*}
Note that $f$ can be obtained from $\widehat{f}$ by the following
\begin{align*}
f([i]; x_1, \ldots, x_k) = \mathrm{pr}_2 \circ \widehat{f} \big( (x_1, 0), \ldots, (0, x_i), \ldots, (x_k, 0)   \big), \text{ for } [i] \in C_k,
\end{align*}
where $\mathrm{pr}_2 : E_\mathrm{Tot} \oplus E \rightarrow E$ is the projection onto the second factor. We define a map $\delta_\mathsf{D} : C^k_\mathsf{D} (D, E) \rightarrow C^{k+1}_\mathsf{D} (D, E)$ implicitly by the following formula
\begin{align*}
\widehat{\delta_\mathsf{D} (f)} = \delta_{D_\mathrm{Tot} \oplus D, E_\mathrm{Tot} \oplus E} (\widehat{f}), ~ \text{ for } f \in C^k_\mathsf{D} (D, E).
\end{align*}
The explicit formula of the differential $\delta_\mathsf{D}$ can be found in \cite{das-dend}. The cohomology groups of the cochain complex $\{ C^\bullet_\mathsf{D}(D, E), \delta_\mathsf{D} \}$ are called the cohomology of the dendriform algebra $D$ with coefficients in the representation $E$. 

\medskip

Let $M \xrightarrow{R} A$ be a relative Rota-Baxter algebra and $(N \xrightarrow{S} B, l, r)$ be a bimodule over it. We consider the cochain complex $\{  C^\bullet_\mathsf{H} (M_\mathrm{Tot}, B_{\triangleright, \triangleleft}), \delta_{M,B} \}$. On the other hand, we know from Proposition \ref{induced-dend-repn} that $N$ is a representation of the dendriform algebra $(M, \prec_R, \succ_R)$. Hence we may consider the dendriform cochain complex $\{ C^\bullet_\mathsf{D} (M, N), \delta_\mathsf{D} \}$.

For each $k \geq 1$, we define a map $\psi_k : C^k_\mathsf{H} (M_\mathrm{Tot}, B_{\triangleright, \triangleleft}) \rightarrow C^{k+1}_\mathsf{D} (M, N)$ by
\begin{align*}
\psi_k (f) ([r]; m_1, \ldots, m_{k+1}) = \begin{cases}
(-1)^{k+1}~ l (m_1, f (m_2, \ldots, m_{k+1})) & \text{ if }~ r = 1 \\
0 & \text{ if }~ 2 \leq r \leq k\\
r (f (m_1, \ldots, m_k), m_{k+1}) & \text{ if }~ r = k+1.
\end{cases}
\end{align*}

\begin{prop}\label{coho-coho-rel}
With the above notations, we have
\begin{align*}
\delta_\mathsf{D} \circ \psi_k = \psi_{k+1} \circ \delta_{M,B}.
\end{align*} 
Hence the collection $\{\psi_k\}_{k \geq 1}$ of maps induces a map $\psi_\bullet : H^\bullet_\mathsf{H} (M_\mathrm{Tot}, B_{\triangleright, \triangleleft}) \rightarrow H^{\bullet + 1 }_\mathsf{D} (M, N)$ on cohomology.
\end{prop}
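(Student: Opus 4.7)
The goal is to show the cochain map identity $\delta_\mathsf{D} \circ \psi_k = \psi_{k+1} \circ \delta_{M,B}$ for every $k \geq 1$. Both sides lie in $C^{k+2}_\mathsf{D}(M, N) = \mathrm{Hom}(\mathbf{k}[C_{k+2}] \otimes M^{\otimes(k+2)}, N)$, so I verify the identity by evaluating on each distinguished index $[r]$ with $1 \le r \le k+2$. Since $\psi_{k+1}(g)$ is supported on the two extreme positions $r = 1$ and $r = k+2$ (there given by $l$ and $r$ respectively), the right-hand side is determined by the formula for $\delta_{M,B} f$ at those two positions and vanishes for $2 \le r \le k+1$.

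My plan for the left-hand side is to exploit the implicit definition of the dendriform differential, namely $\widehat{\delta_\mathsf{D}(h)} = \delta_{M_\mathrm{Tot} \oplus M,\, N_\mathrm{Tot} \oplus N}(\widehat{h})$. First I make the hat map explicit on $\psi_k(f)$, and then I apply the ordinary Hochschild differential for the relevant semidirect product algebra and bimodule. Each time this differential multiplies a distinguished entry (sitting in the second slot of $M_\mathrm{Tot} \oplus M$) with an entry of the form $(R(m_i), 0)$, the compatibility axiom (\ref{bimod-rs}) or (\ref{bimod-sr}) of a bimodule over a relative Rota-Baxter algebra converts the resulting product in $B$ into $S$ applied to a combination of an $A$-action on $N$ together with an $l$- or $r$-term. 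These are precisely the terms appearing in the formula for $\delta_{M,B}$ and in the images of $\psi_{k+1}$, which is what makes the matching possible.

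The verification then splits into three kinds of evaluations. For $r = 1$ I expect to recover $(-1)^{k+2}\, l(m_1, (\delta_{M,B} f)(m_2, \ldots, m_{k+2}))$ after collecting the boundary terms in which the distinguished slot interacts with the outer multiplication $m_1 \triangleright_B (\cdots)$; the symmetric computation for $r = k+2$ recovers the trailing $r$-term. The genuinely non-trivial case is $2 \le r \le k+1$, where $\psi_{k+1}(\delta_{M,B} f)$ vanishes, and I have to show that the expansion of $\widehat{\delta_\mathsf{D}(\psi_k f)}$ at index $[r]$ also produces zero. Here the interior terms of the Hochschild differential, namely those coming from replacements of the form $m_i \ast_R m_{i+1}$, are matched against the $l$- and $r$-contributions using (\ref{bimod-l}) and (\ref{bimod-r}) together with the fact that $N$ is a representation of the induced dendriform algebra $(M, \prec_R, \succ_R)$ granted by Proposition \ref{induced-dend-repn}.

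The main obstacle will be the sign bookkeeping in the middle case $2 \le r \le k+1$, combined with the careful matching of the many terms produced by the Hochschild differential. In particular, one has to track whether the distinguished slot $[r]$ sits at the boundary of a Rota-Baxter coupling (i.e., adjacent to some $m_{r \pm 1}$), because only then do the relations (\ref{bimod-rs}) and (\ref{bimod-sr}) get invoked; in the purely interior situation, the cancellation is powered by (\ref{bimod-l}) and (\ref{bimod-r}). I expect the overall computation to parallel the argument in \cite{das-rota} that identifies the cohomology of the relative Rota-Baxter operator with a shift of the cohomology of the induced dendriform algebra (cf.\ Remark \ref{rem-fin}), with the bimodule $(N \xrightarrow{S} B, l, r)$ replacing the adjoint bimodule throughout.
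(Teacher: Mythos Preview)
Your approach---a direct verification via the implicit definition $\widehat{\delta_\mathsf{D}(h)} = \delta_{M_\mathrm{Tot} \oplus M,\, N_\mathrm{Tot} \oplus N}(\widehat{h})$, splitting on the position $[r]$ of the distinguished slot---is sound in principle and would amount to reproving \cite[Proposition~3.5]{das-rota} with general coefficients. What you have written, however, is a plan rather than a proof: phrases like ``I expect to recover'' and ``the main obstacle will be the sign bookkeeping'' signal that the actual term-matching, particularly in the interior range $2 \le r \le k+1$, has not been carried out. If you pursue this route you will indeed need the full strength of (\ref{bimod-l})--(\ref{bimod-sr}), and the cancellation in the middle range is delicate.

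The paper takes a shorter and structurally cleaner route that you allude to only in your last sentence but do not exploit. Rather than repeating the direct computation with coefficients, it forms the semidirect product relative Rota-Baxter algebra $M \oplus N \xrightarrow{R \oplus S} A \oplus B$ of Proposition~\ref{semi-rrb} and applies the already-established \emph{adjoint} case \cite[Proposition~3.5]{das-rota} to it. This yields a chain map $\Psi_\bullet$ from $\{C^\bullet_\mathsf{H}((M \oplus N)_\mathrm{Tot}, (A \oplus B)_{\triangleright,\triangleleft})\}$ to $\{C^{\bullet+1}_\mathsf{D}(M \oplus N, M \oplus N)\}$. One then observes that $\{C^\bullet_\mathsf{H}(M_\mathrm{Tot}, B_{\triangleright,\triangleleft})\}$ and $\{C^{\bullet+1}_\mathsf{D}(M, N)\}$ sit inside these as subcomplexes, and that the restriction of $\Psi_k$ to the first is exactly $\psi_k$ landing in the second. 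The identity $\delta_\mathsf{D} \circ \psi_k = \psi_{k+1} \circ \delta_{M,B}$ is then immediate, with no new sign bookkeeping at all. Your approach buys self-containment; the paper's buys economy and makes transparent that the coefficient version is a formal consequence of the adjoint version via the semidirect product.
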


\begin{proof}
We first consider the semidirect product relative Rota-Baxter algebra $M \oplus N \xrightarrow{R \oplus S} A \oplus B$ given in Proposition \ref{semi-rrb}. Hence it follows from Proposition \ref{prop-rrb-dend} that the space $M \oplus N$ carries a dendriform algebra structure with products
\begin{align*}
(m,n) \prec_{R \oplus S} (m', n') =~&  (m \cdot_M R (m'), ~ l (m, S (n') + n \cdot_N R (m')),\\
(m,n) \succ_{R \oplus S} (m', n') =~& (R(m) \cdot_M m', ~ R(m) \cdot_N n' + r (S(n), m')).
\end{align*}
The corresponding total associative multiplication is given by
\begin{align*}
(m,n) \ast_{R \oplus S} (m',n') = (m \ast_R m', ~  l (m, S (n') + n \cdot_N R (m') + R(m) \cdot_N n' + r (S(n), m')).
\end{align*}
Moreover, $A \oplus B$ is a bimodule over this associative algebra with left and right actions
\begin{align*}
(m,n) \triangleright_{A \oplus B} (a, b) =~& \big( R(m) \cdot a - R (m \cdot_M a),~ R(m) \cdot_B  b + S(n) \cdot_B a - S ( l (m, b) + n \cdot_N a)  \big),\\
(a,b) \triangleleft_{A \oplus B} (m,n) =~& \big(  a \cdot R(m) - R (a \cdot_M m), ~ a \cdot_B S(n) + b \cdot_B R(m) - S ( a \cdot_N n + r (b, m))  \big).
\end{align*}
It follows from \cite[Proposition 3.5]{das-rota} that the collection $\{\Psi_k\}_{k \geq 1}$ of maps
\begin{align*}
& \Psi_k : C^k_\mathsf{H} (M \oplus N, A \oplus B) \rightarrow C^{k+1}_\mathsf{D} (M \oplus N, M \oplus N) \\
\Psi_k (F)([r]; (m_1, n_1),& \ldots, (m_{k+1}, n_{k+1})) = \begin{cases}
(-1)^{k+1} ~ (m_1, n_1) \triangleleft F ( (m_2, n_2), \ldots, (m_{k+1}, n_{k+1})) & \text{if } r = 1 \\
0 & \text{if } 2 \leq r \leq k \\
F ( (m_1, n_1), \ldots, (m_{k}, n_{k})) \triangleright (m_{k+1}, n_{k+1}) & \text{if } r = k+1
\end{cases}
\end{align*}
defines a morphism of cochain complexes from $\{ C^\bullet_\mathsf{H} (M \oplus N, A \oplus B), \delta_{ M \oplus N, A \oplus B } \}$ to $\{ C^{\bullet + 1}_\mathsf{D} (M \oplus N, M \oplus N), \delta_\mathsf{D} \}$. Note that $\{  C^\bullet_\mathsf{H} (M_\mathrm{Tot}, B_{\triangleright, \triangleleft}), \delta_{M,B} \}$ is a subcomplex of  $\{ C^\bullet_\mathsf{H} (M \oplus N, A \oplus B), \delta_{M\oplus N, A \oplus B} \}$, and $\{C^{\bullet +1}_\mathsf{D} (M,N), \delta_\mathsf{D} \}$ is a subcomplex of  $\{ C^{\bullet + 1}_\mathsf{D} (M \oplus N, M \oplus N), \delta_\mathsf{D} \}$. Finally, the result follows as the restriction of $\Psi_k$ to the subspace $C^k_\mathsf{H} (M_\mathrm{Tot}, B_{\triangleright, \triangleleft})$ is given by $\psi_k$, for all $k \geq 1$. This proves the result.
\end{proof}

\section{Cohomology of relative Rota-Baxter algebras with coefficients in bimodule}\label{sec-4}

In this section, we define the cohomology of a relative Rota-Baxter algebra with coefficients in a bimodule. This cohomology generalizes the cohomology of relative Rota-Baxter algebras (with coefficients in the adjoint bimodule) defined in \cite{DasSK}.


We first recall the cochain complex of a relative Rota-Baxter algebra $M \xrightarrow{R} A$. Let $\mathcal{A}^{k-1, 1}$ be the direct sum of all possible $k$ tensor powers of $A$ and $M$ in which $A$ appears $k-1$ times (hence $M$ appears exactly once). For instance,
\begin{align*}
\mathcal{A}^{0,1} = M, \quad \mathcal{A}^{1,1} = ( A \otimes M) \oplus (M \otimes A), \quad \mathcal{A}^{2,1} = (A \otimes A \otimes M) \oplus (A \otimes M \otimes A) \oplus (M \otimes A \otimes A).
\end{align*}
For each $k\geq 0$, the space of $k$-cochains $C^k(A,M,R)$ is given by 
\begin{align*}
C^k(A,M,R) = \begin{cases}
0 & \text{ if } k = 0, \\
\mathrm{Hom}(A, A) \oplus \mathrm{Hom}(M, M)& \text{ if } k = 1, \\ 
\mathrm{Hom}(A^{\otimes k}, A) \oplus \mathrm{Hom}(\mathcal{A}^{k-1,1}, M) \oplus \mathrm{Hom}(M^{\otimes k-1}, A) & \text{ if } k \geq 2.
\end{cases}
\end{align*}
The coboundary operator $\mathcal{D}:C^k(A,M,R)\rightarrow C^{k+1}(A,M,R)$ is given by
\begin{align*}
\mathcal{D}(\alpha,\beta)=~& (\delta_A(\alpha),\delta^\alpha_{A,M}(\beta),h_R(\alpha,\beta)), ~ \mbox{for }(\alpha,\beta)\in C^1(A,M,R) = \mathrm{Hom}(A, A) \oplus \mathrm{Hom}(M, M),\\
\mathcal{D}(\alpha,\beta,\gamma)=~& (\delta_A(\alpha),\delta^\alpha_{A,M}(\beta),\delta_{M,A}(\gamma)+h_R(\alpha,\beta)), ~ \mbox{for }(\alpha,\beta, \gamma)\in C^{k \geq 2} (A,M,R).
\end{align*}
Here $\delta_A$ is the Hochschild coboundary operator of the associative algebra $A$ with coefficients in the adjoint bimodule, and for any $\alpha \in \mathrm{Hom}(A^{\otimes k}, A)$, the map $\delta_{A, M}^\alpha : \mathrm{Hom} (\mathcal{A}^{k-1,1}, M) \rightarrow \mathrm{Hom} (\mathcal{A}^{k,1}, M)$ is given by
\begin{align*}
\delta_{A, M}^\alpha (\beta) (a_1, \ldots, a_{k+1}) =&~ (\mu+ l_M) (a_1, (\alpha + \beta) (a_2, \ldots, a_{k+1})) \\
&+ \sum_{i=1}^k (-1)^i ~\beta (a_1, \ldots, (\mu + l_M + r_M)(a_i, a_{i+1}), \ldots, a_{k+1}) \\
&+ (-1)^{k+1} ~ (\mu + r_M) ((\alpha + \beta) (a_1, \ldots, a_k), a_{k+1} ),
\end{align*}
for $\beta \in \mathrm{Hom}(\mathcal{A}^{k-1,1}, M)$ and $a_1, \ldots, \widehat{a_s}, \ldots, a_{k+1} \in A$, $a_s \in M$ ($1 \leq s \leq k+1$). The map $\delta_{M,A}$ is the Hochschild coboundary operator of the associative algebra $M_\mathrm{Tot}$ with coefficients in the $M_\mathrm{Tot}$-bimodule $A$ considered in Remark \ref{rem-fin}, and the map $h_R : \mathrm{Hom} (A^{\otimes k},A) \oplus \mathrm{Hom} (\mathcal{A}^{k-1,1}, M) \rightarrow \mathrm{Hom}(M^{\otimes k}, A)$ is given by
\begin{align*}
h_R (\alpha, \beta) (m_1, \ldots, m_k ) = (-1)^k \big\{ \alpha (R(m_1), \ldots, R(m_k)) - \sum_{i=1}^k R \circ \beta \big( R(m_1), \ldots, m_i, \ldots, R(m_k)  \big)  \big\}.
\end{align*} 
It has been shown in \cite{DasSK} that $\{ C^\bullet (A, M, R), \mathcal{D} \}$ is a cochain complex. The corresponding cohomology groups are denoted by $H^\bullet(A,M,R)$, and called the cohomology of the relative Rota-Baxter algebra $M \xrightarrow{R} A$. 

In the same reference, the authors also considered formal deformations of a relative Rota-Baxter algebra $M \xrightarrow{R} A$ and showed that the above cohomology govern such deformations.


\medskip

\medskip

\noindent {\bf Cohomology of a relative Rota-Baxter algebra with coefficients in a bimodule.} Here we define the cochain complex of a relative Rota-Baxter algebra with coefficients in a bimodule. Let $M \xrightarrow{R} A$ be a relative Rota-Baxter algebra and $(N \xrightarrow{S} B, l, r)$ be a bimodule over it. We fix the following notations.

- Let $\delta_{A, B} : \mathrm{Hom} (A^{\otimes k}, B) \rightarrow \mathrm{Hom} (A^{\otimes k+1}, B)$, for $k \geq 1$, be the Hochschild coboundary operator of the algebra $A$ with coefficients in the $A$-bimodule $B$.

- For any $\alpha \in \mathrm{Hom}(A^{\otimes k}, B)$ with $k \geq 1$, we define a map $\delta_{A, N}^\alpha : \mathrm{Hom} (\mathcal{A}^{k-1,1}, N) \rightarrow \mathrm{Hom} (\mathcal{A}^{k,1}, N)$ by
\begin{align*}
\delta_{A, N}^\alpha (\beta) (a_1, \ldots, a_{k+1}) =~& (l+ l_N) (a_1, (\alpha + \beta) (a_2, \ldots, a_{k+1})) \\
&+ \sum_{i=1}^k (-1)^i ~\beta (a_1, \ldots, (\mu + l_M + r_M)(a_i, a_{i+1}), \ldots, a_{k+1}) \\
&+ (-1)^{k+1} ~ (r + r_N) ((\alpha + \beta) (a_1, \ldots, a_k), a_{k+1} ),
\end{align*}
\qquad for $\beta \in \mathrm{Hom}(\mathcal{A}^{k-1,1}, N)$ and $a_1, \ldots, \widehat{a_s}, \ldots, a_{k+1} \in A$, $a_s \in M$ ($1 \leq s \leq k+1$).  

- Let $\delta_{M, B} : \mathrm{Hom} (M^{\otimes k-1}, B) \rightarrow \mathrm{Hom} (M^{\otimes k}, B)$, for $k \geq 2$, be the Hochschild coboundary operator of the algebra ${M_\mathrm{Tot}}$ with coefficients in the bimodule $B_{\triangleright, \triangleleft}$ given in Proposition \ref{denote-bimod}.

- Finally, we define a map $h_R : \mathrm{Hom} (A^{\otimes k}, B) \oplus \mathrm{Hom} (\mathcal{A}^{k-1,1}, N) \rightarrow \mathrm{Hom}(M^{\otimes k}, B)$ by
\begin{align*}
h_R (\alpha, \beta) (m_1, \ldots, m_k ) = (-1)^k \big\{ \alpha (R(m_1), \ldots, R(m_k)) - \sum_{i=1}^k S \circ \beta \big( R(m_1), \ldots, m_i, \ldots, R(m_k)  \big)  \big\}.
\end{align*}

\medskip

We are now in a position to define the cohomology of the relative Rota-Baxter algebra $M \xrightarrow{R} A$ with coefficients in the bimodule $(N \xrightarrow{S} B, l, r)$. For each $k \geq 0$, we define  $C^k_\mathsf{rRB} (M \xrightarrow{R} A; N \xrightarrow{S} B)$ by
\begin{align*}
C^k_\mathsf{rRB} (M \xrightarrow{R} A; N \xrightarrow{S} B) = \begin{cases}
0 & \text{ if } k = 0, \\
\mathrm{Hom}(A, B) \oplus \mathrm{Hom}(M, N)& \text{ if } k = 1, \\ 
\mathrm{Hom}(A^{\otimes k}, B) \oplus \mathrm{Hom}(\mathcal{A}^{k-1,1}, N) \oplus \mathrm{Hom}(M^{\otimes k-1}, B) & \text{ if } k \geq 2.
\end{cases}
\end{align*}
Define a map $\delta_\mathsf{rRB} : C^k_\mathsf{rRB} (M \xrightarrow{R} A; N \xrightarrow{S} B) \rightarrow C^{k+1}_\mathsf{rRB} (M \xrightarrow{R} A; N \xrightarrow{S} B)$ by
\begin{align*}
\delta_\mathsf{rRB} (\alpha,\beta) =~& \big(\delta_{A, B} (\alpha),~ \delta_{A, N}^\alpha (\beta),~ h_R (\alpha, \beta) \big), \text{ for } (\alpha,\beta) \in C^1_\mathsf{rRB} (M \xrightarrow{R} A; N \xrightarrow{S} B), \\
\delta_\mathsf{rRB} ((\alpha, \beta, \gamma)) =~& \big( \delta_{A, B} (\alpha),~ \delta_{A, N}^\alpha (\beta),~ \delta_{M, B} (\gamma) + h_R (\alpha, \beta)  \big), \text{ for } (\alpha, \beta, \gamma) \in C^{k \geq 2}_\mathsf{rRB} (M \xrightarrow{R} A; N \xrightarrow{S} B).
\end{align*}

\begin{prop}
With all these notations, we have  $(\delta_\mathsf{rRB} )^2 = 0$.
\end{prop}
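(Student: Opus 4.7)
The plan is to reduce $(\delta_\mathsf{rRB})^2=0$ to the already-known cochain identity $\mathcal{D}^2=0$ for the cohomology of a relative Rota-Baxter algebra with adjoint coefficients, proved in \cite{DasSK}. The tool for the reduction is the semidirect product relative Rota-Baxter algebra $M\oplus N \xrightarrow{R\oplus S} A\oplus B$ provided by Proposition \ref{semi-rrb}: our bimodule cochain complex should embed as a subcomplex of the adjoint cochain complex of this semidirect product, in the spirit of Gerstenhaber's classical trick relating Hochschild cohomology with coefficients to adjoint Hochschild cohomology of the semidirect product algebra.

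Concretely, for each $k\geq 1$, I would define a linear embedding
\[
\iota : C^k_\mathsf{rRB}(M\xrightarrow{R}A;\,N\xrightarrow{S}B)\ \hookrightarrow\ C^k(A\oplus B,\,M\oplus N,\,R\oplus S)
\]
sending $(\alpha,\beta,\gamma)$ to the triple $(\tilde\alpha,\tilde\beta,\tilde\gamma)$ where $\tilde\alpha((a_1,b_1),\ldots,(a_k,b_k))=(0,\alpha(a_1,\ldots,a_k))$, $\tilde\gamma((m_1,n_1),\ldots,(m_{k-1},n_{k-1}))=(0,\gamma(m_1,\ldots,m_{k-1}))$, and $\tilde\beta$ is the analogous lift of $\beta \in \mathrm{Hom}(\mathcal{A}^{k-1,1},N)$ that reads off the $A$- and $M$-components of its inputs and returns the $N$-component of the output. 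That is, each bimodule cochain lifts to the semidirect product cochain that \emph{forgets the $B$ and $N$ components of the inputs and embeds its output into the $N$ or $B$ component}.

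The substance of the proof is the verification that $\iota$ intertwines the differentials: $\mathcal{D}\circ\iota=\iota\circ\delta_\mathsf{rRB}$. This breaks into four componentwise checks. First, $\delta_{A\oplus B}\tilde\alpha=\widetilde{\delta_{A,B}\alpha}$ follows directly from the formula $(a,b)\cdot_\ltimes(a',b')=(a\cdot a',\,a\cdot_B b'+b\cdot_B a')$. Second, $\delta^{\tilde\alpha}_{A\oplus B,\,M\oplus N}\tilde\beta=\widetilde{\delta^{\alpha}_{A,N}\beta}$ uses the bimodule compatibilities \eqref{bimod-l}-\eqref{bimod-r} which are precisely what guarantees the lifted differential respects the position of the $M$-input. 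Third, $\delta_{M\oplus N,\,A\oplus B}\tilde\gamma=\widetilde{\delta_{M,B}\gamma}$ relies on the explicit form of the $(M\oplus N)_{\mathrm{Tot}}$-bimodule structure on $A\oplus B$ from Proposition \ref{denote-bimod} applied in the semidirect product, together with the formula for $\ast_{R\oplus S}$. Fourth, $h_{R\oplus S}(\tilde\alpha,\tilde\beta)=\widetilde{h_R(\alpha,\beta)}$ is immediate from $(R\oplus S)(m,n)=(R(m),S(n))$ and the defining formula of $h_{R\oplus S}$. Once these four identities are in place, injectivity of $\iota$ and $\mathcal{D}^2=0$ yield $(\delta_\mathsf{rRB})^2=0$.

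The main obstacle is the third identity: the operator $\delta_{M\oplus N,A\oplus B}$ is built from the bimodule actions $\triangleright,\triangleleft$ of $(M\oplus N)_\mathrm{Tot}$ on $A\oplus B$, whose very definition mixes $R$, $S$, $l$, $r$, and both bimodule structures at once, so one must patiently expand and repeatedly invoke \eqref{bimod-rs}-\eqref{bimod-sr} to see that the $B$-component of $\delta_{M\oplus N,A\oplus B}\tilde\gamma$ collapses to exactly $\delta_{M,B}\gamma$. Should the semidirect product route appear too indirect, I would instead verify $(\delta_\mathsf{rRB})^2=0$ by a direct computation in four pieces: $\delta_{A,B}^2=0$ (standard Hochschild), $\delta^{\delta_{A,B}\alpha}_{A,N}\circ\delta^{\alpha}_{A,N}=0$, $\delta_{M,B}^2=0$ (standard Hochschild, via Proposition \ref{denote-bimod}), and the cross identity $\delta_{M,B}\bigl(h_R(\alpha,\beta)\bigr)+h_R\bigl(\delta_{A,B}\alpha,\,\delta^{\alpha}_{A,N}\beta\bigr)=0$, the last being where \eqref{bimod-rr}-\eqref{bimod-sr} all enter simultaneously.
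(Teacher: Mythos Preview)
Your proposal is correct and takes essentially the same approach as the paper: embed the bimodule cochain complex into the adjoint cochain complex of the semidirect product relative Rota-Baxter algebra $M\oplus N \xrightarrow{R\oplus S} A\oplus B$ of Proposition~\ref{semi-rrb}, verify that $\delta_\mathsf{rRB}$ is the restriction of $\mathcal{D}$, and conclude from $\mathcal{D}^2=0$. In fact you spell out the componentwise checks and the explicit form of the embedding more carefully than the paper, which simply asserts the inclusion is ``obvious'' and that the restriction property ``can be easily verified.''
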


\begin{proof}
If $M \xrightarrow{R} A$ is a relative Rota-Baxter algebra and $(N \xrightarrow{S} B, l, r)$ is a bimodule over it, then by Proposition \ref{semi-rrb} we have the semidirect product relative Rota-Baxter algebra  $M \oplus N \xrightarrow{ R \oplus S} A \oplus B$. Let $\{C^\bullet(A\oplus B,M\oplus N,R\oplus S),\mathcal{D}\}$ be the cochain complex of this semidirect product relative Rota-Baxter algebra (with coefficients in the adjoint bimodule). Then there is an obvious inclusion
\begin{align*}
C^k_\mathrm{rRB} (M \xrightarrow{R} A ; N \xrightarrow{S} B) &~\hookrightarrow ~ C^k (A \oplus B, M \oplus N, R\oplus S), \text{ for } k \geq 1.
\end{align*}
It can be easily verified that the map $\delta_\mathsf{rRB} $ is the restriction of the map $\mathcal{D}$ on the collection of subspaces $C^\bullet_\mathrm{rRB} (M \xrightarrow{R} A ; N \xrightarrow{S} B)$. In other words, $\{ C^\bullet_\mathrm{rRB} (M \xrightarrow{R} A ; N \xrightarrow{S} B), \delta_\mathrm{rRB}\}$ is a subcomplex of the cochain complex $\{C^\bullet(A\oplus B,M\oplus N,R\oplus S),\mathcal{D}\}$, which implies that $(\delta_\mathrm{rRB})^2 = 0$.
\end{proof}

\medskip

It follows from the above proposition that $\{ C^\bullet_\mathsf{rRB} (M \xrightarrow{R} A; N \xrightarrow{S} B), \delta_\mathsf{rRB} \}$ is a cochain complex. Let $Z^k_\mathsf{rRB} (M \xrightarrow{R} A; N \xrightarrow{S} B)$ and $B^k_\mathsf{rRB} (M \xrightarrow{R} A; N \xrightarrow{S} B)$ be the space of $k$-cocycles and $k$-coboundaries, respectively. The corresponding quotients
\begin{align*}
H^k_\mathsf{rRB} (M \xrightarrow{R} A; N \xrightarrow{S} B) := \frac{  Z^k_\mathsf{rRB} (M \xrightarrow{R} A; N \xrightarrow{S} B)}{ B^k_\mathsf{rRB} (M \xrightarrow{R} A; N \xrightarrow{S} B)}, \text{ for } k \geq 0
\end{align*}
are called the cohomology of the relative Rota-Baxter algebra $M \xrightarrow{R} A$ with coefficients in the bimodule $(N \xrightarrow{S} B, l, r).$


\medskip

Let $M \xrightarrow{R} A$ be a relative Rota-Baxter algebra and $(N \xrightarrow{S} B, l, r)$ be a bimodule over it. A pair $(\alpha, \beta) \in \mathrm{Hom}(A, B) \oplus \mathrm{Hom}(M,N) = C^1_\mathsf{rRB} (M \xrightarrow{R} A; N \xrightarrow{S} B)$ is said to be a derivation on the relative Rota-Baxter algebra $M \xrightarrow{R} A$ with values in the bimodule $(N \xrightarrow{S} B, l, r)$ if the following identities are hold:
\begin{align*}
\alpha (a \cdot a') =~& \alpha (a) \cdot_B a' + a \cdot_B \alpha (a'),\\
\beta (a \cdot_M m) =~& r (\alpha (a), m) + a \cdot_N \beta (m),\\
\beta (m \cdot_M a) =~& \beta (m) \cdot_N a + l (m, \alpha (a)),\\
\alpha (R(m)) =~& S (\beta (m)), 
\end{align*} 
for $a, a' \in A$ and $m \in M$. The set of all derivations are denoted by $\mathrm{Der} (M \xrightarrow{R} A; N \xrightarrow{S} B)$. 
It follows from the definition that $$Z^1_\mathsf{rRB}(M \xrightarrow{R} A; N \xrightarrow{S} B) = \mathrm{Der} (M \xrightarrow{R} A; N \xrightarrow{S} B).$$


\medskip

\begin{remark}
Let $M \xrightarrow{R} A$ be a relative Rota-Baxter algebra and consider it as a adjoint bimodule over itself (Example \ref{adj-bim}). Then the cochain complex $\{ C^\bullet ( M \xrightarrow{R} A ; M \xrightarrow{R} A), \delta_\mathrm{rRB} \}$ coincides with the complex $\{ C^\bullet (A, M, R), \mathcal{D} \}$ given in \cite{DasSK}. Hence the corresponding cohomology groups are the same.
\end{remark}


\noindent {\bf Cohomology of a Rota-Baxter algebra with coefficients in a Rota-Baxter bimodule.} Let $(A,R)$ be a Rota-Baxter algebra and $(M, R_M)$ be a Rota-Baxter bimodule. In Example \ref{rb-bim}, we observed that the Rota-Baxter algebra $(A,R)$ can be considered as a relative Rota-Baxter algebra $A \xrightarrow{R} A$ and the Rota-Baxter bimodule $(M, R_M)$ can be considered as a bimodule $M \xrightarrow{R_M} M$ over the relative Rota-Baxter algebra $A \xrightarrow{R} A$. Consider the cochain complex $\{ C^\bullet_\mathsf{rRB} (A \xrightarrow{R} A; M \xrightarrow{R_M} M), \delta_\mathsf{rRB} \}$ of the relative Rota-Baxter algebra $A \xrightarrow{R} A$ with coefficients in the bimodule $M \xrightarrow{R_M} M$. 

For each $k \geq 0$, we define $C^k_\mathsf{RB}(A,R; M, R_M)$ by
\begin{align*}
C^k_\mathsf{RB}(A, R;M, R_M) = \begin{cases} 0 & \text{ if } k =0,\\
\mathrm{Hom}(A, M)&\text{ if } k =1, \\
\mathrm{Hom}(A^{\otimes k}, M) \oplus \mathrm{Hom}(A^{\otimes k-1}, M) & \text{ if } k \geq 2.
\end{cases}
\end{align*}
Then there is an embedding $i : C^k_\mathsf{RB} (A,R;M, R_M) \rightarrow  C^k_\mathsf{rRB} (A \xrightarrow{R} A; M \xrightarrow{R_M} M)$ given by
\begin{align*}
i(\alpha) = (\alpha,\alpha) ~~~~ \text{ and } ~~~~ i(\beta, \gamma) = (\beta, \beta, \gamma), ~ \text{ for } \alpha \in \mathrm{Hom}(A,M) \text{ and } (\beta, \gamma) \in C^{k \geq 2}_\mathsf{RB}(A,R;M, R_M).
\end{align*}
It is easy to see that $\delta_\mathsf{rRB} (\mathrm{im~} (i)) \subset \mathrm{im~} (i)$. Hence the differential $\delta_\mathsf{rRB}$ restricts to a differential (which we denote by $\delta_\mathsf{RB}$) on $C^\bullet_\mathsf{RB}(A,R;M, R_M)$. 
In other words, the complex $\{ C^\bullet_\mathsf{RB} (A, R; M, R_M), \delta_\mathsf{RB} \}$ is a subcomplex of $\{ C^\bullet_\mathsf{rRB} (A \xrightarrow{R} A; M \xrightarrow{R_M} M), \delta_\mathsf{rRB} \}$. The cohomology groups of $\{ C^\bullet_\mathsf{RB} (A, R; M, R_M), \delta_\mathsf{RB} \}$ are called the cohomology of the Rota-Baxter algebra $(A, R)$ with coefficients in the Rota-Baxter bimodule $(M, R_M)$. Note that the above cochain complex coincides with the one given in \cite{wang-zhou}. Hence the cohomologies are also same.





\section{Abelian extensions of relative Rota-Baxter algebras}\label{sec-5}

In this section, we study abelian extensions of a relative Rota-Baxter algebra by a bimodule. Our main result classifies isomorphism classes of such abelian extensions by the second cohomology group of the relative Rota-Baxter algebra.

Let $M \xrightarrow{R} A$ be a relative Rota-Baxter algebra and $N \xrightarrow{S} B$ be a $2$-term chain complex. Note that $N \xrightarrow{S} B$ can be regarded as a relative Rota-Baxter algebra with trivial associative multiplication on $B$ and its trivial bimodule structure on $N$.

\begin{defn}
An {\bf abelian extension} of a relative Rota-Baxter algebra $M \xrightarrow{R} A$ by a $2$-term cochain complex $N \xrightarrow{S} B$ is a short exact sequence of relative Rota-Baxter algebras
\begin{align}\label{abelian-diagram}
\xymatrix{
o \ar[r] & N \ar[r]^{\overline{i}} \ar[d]_S & \widehat{M} \ar[r]^{\overline{p}} \ar[d]^{\widehat{R}} & M \ar[r] \ar[d]^R & 0 \\
0 \ar[r] & B \ar[r]_i & \widehat{A}  \ar[r]_p & A \ar[r] & 0.
}
\end{align} 
\end{defn}

Let (\ref{abelian-diagram}) be an abelian extension of the relative Rota-Baxter algebra $M \xrightarrow{R} A$ by a $2$-term cochain complex $N \xrightarrow{S} B$. A section of (\ref{abelian-diagram}) is a pair $(s, \overline{s})$ of linear maps $s : A \rightarrow \widehat{A}$ and $\overline{s} : M \rightarrow \widehat{M}$ satisfying $p \circ s = \mathrm{id}_A$ and $\overline{p} \circ \overline{s} = \mathrm{id}_M$. Note that a section always exists.

Let $(s, \overline{s})$ be a section of (\ref{abelian-diagram}). We define maps $l_B : A \otimes B \rightarrow B,~ (a, b) \mapsto a \cdot_B b$ and $r_B : B \otimes A \rightarrow B, ~(b, a) \mapsto b \cdot_B a$ by
\begin{align*}
a \cdot_B b = s(a) \cdot_{\widehat{A}} i(b) \quad \text{ and } \quad 
b \cdot_B a = i(b) \cdot_{\widehat{A}} s(a).
\end{align*}
Here $~\cdot_{\widehat{A}}~$ denotes the multiplication on the associative algebra $\widehat{A}$. It is easy to see that the maps $l_B, r_B$ defines an $A$-bimodule structure on $B$. Similarly, we define maps $l_N : A \otimes N \rightarrow N, ~(a, n) \mapsto a \cdot_N n$ and $r_N : N \otimes A \rightarrow N, (n,a) \mapsto n \cdot_N a$ by
\begin{align*}
a \cdot_N n = s(a) \cdot_{\widehat{M}} \overline{i}(n) \quad \text{ and } \quad n \cdot_N a =  \overline{i}(n) \cdot_{\widehat{M}} s(a).
\end{align*}
Here $\cdot_{\widehat{M}}$ denotes both left and right $\widehat{A}$-actions on $\widehat{M}$. The maps $l_N, r_N$ defines an $A$-bimodule structure on $N$. We also define maps $l : M \otimes B \rightarrow N$ and $r: B \otimes M \rightarrow N$ by
\begin{align*}
l (m,b) =   \overline{s}(m) \cdot_{\widehat{M}} i(b)   \quad \text{ and } \quad  r (b,m) =   i(b) \cdot_{\widehat{M}} \overline{s}(m).
\end{align*}
It simply follows that the maps $l$ and $r$ satisfy the conditions of (\ref{bimod-l}) and (\ref{bimod-r}). Finally, we observe that
\begin{align*}
&R(m) \cdot_B S(n) && S(n) \cdot_B R(m)\\
&= sR(m) \cdot_{\widehat{A}} iS(n) && =iS(n) \cdot_{\widehat{A}} sR(m)\\
&= \widehat{R} (\overline{s}(m)) \cdot_{\widehat{A}} \widehat{R}(\overline{i}(n)) && =\widehat{R} (\overline{i}(n)) \cdot_{\widehat{A}} \widehat{R}(\overline{s}(m))\\
&= \widehat{R} \big( \widehat{R}(\overline{s}(m)) \cdot_{\widehat{M}} \overline{i}(n) ~+~ \overline{s}(m) \cdot_{\widehat{M}} \widehat{R} (\overline{i}(n)) \big) && = \widehat{R} \big( \widehat{R} (\overline{i}(n)) \cdot_{\widehat{M}} \overline{s}(m) ~+~ \overline{i}(n) \cdot_{\widehat{M}}  \widehat{R}(\overline{s}(m))    \big)\\
&= \widehat{R} \big( sR(m) \cdot_{\widehat{M}} \overline{i}(n) ~+~ \overline{s}(m) \cdot_{\widehat{M}} iS(n)  \big) && = \widehat{R} \big( {i}S(n) \cdot_{\widehat{M}} \overline{s}(m) ~+~ \overline{i}(n) \cdot_{\widehat{M}} {s}R(m)   \big)\\
&= S \big( R(m) \cdot_N n + l (m, S(n)) \big), && = S \big( r (S(n), m) + n \cdot_N R(m)  \big).
\end{align*}
Thus, $(N \xrightarrow{S} B, l, r)$ defines a bimodule over the relative Rota-Baxter algebra.

Next, we claim that this bimodule is independent of the choice of the section $(s, \overline{s})$ of (\ref{abelian-diagram}). To prove this, take another section $(s', \overline{s}')$ of (\ref{abelian-diagram}).  We first observe that
\begin{align*}
s(a) - s'(a) \in \mathrm{ker ~} p = \mathrm{im ~}i ~~~ \text{ and } ~~~ \overline{s}(m) - \overline{s}'(m) \in  \mathrm{ker ~}\overline{p} = \mathrm{im ~}\overline{i}, ~ \text{ for } a \in A, ~ m \in M.
\end{align*}
Hence we have
\begin{align*}
a \cdot_B b - a \cdot'_B b = (s(a) - s'(a)) \cdot_{\widehat{A}} i(b) = 0 ~~~ \text{ and } ~~~ b \cdot_B a - b \cdot'_B a = i(b) \cdot_{\widehat{A}} (s(a) - s'(a)) = 0.
\end{align*}
Here $\cdot'_B$ denotes the $A$-bimodule structure on $B$ induced by the section $(s', \overline{s}')$. It follows that the $A$-bimodule structure on $B$ does not depend on the choice of section. Similarly,
\begin{align*}
a \cdot_N n - a \cdot'_N n = (s(a) - s'(a)) \cdot_{\widehat{M}} \overline{i}(n) = 0 ~~~~ \text{~ and ~} ~~~~ n \cdot_N a - n \cdot'_N a = \overline{i}(n) \cdot_{\widehat{M}} (s(a) - s'(a)) = 0
\end{align*}
which shows that the $A$-bimodule structure on $N$ is independent of the choice of section. Finally, if $l':M\otimes B\rightarrow N$ and $r':B\otimes M\rightarrow N$ denote the bilinear maps induced by the section $(s', \overline{s}')$ then
\begin{align*}
l (m, b) - l'(m, b) = (\overline{s}(m) - \overline{s}'(m)) \cdot_{\widehat{M}} i(b) = 0 ~~~ \text{ and } ~~~ r (b,m) - r'(b,m) = i(b) \cdot_{\widehat{M}} (\overline{s}(m) - \overline{s}'(m)) = 0.
\end{align*}
Hence the bilinear maps $l:M\otimes B \rightarrow N$ and $r:B\otimes M\rightarrow N$ are also independent of the choice of section, which proves our claim.

\begin{defn}\label{iso-abelian}
Let $\widehat{M} \xrightarrow{\widehat{R}} \widehat{A}$ and $\widehat{M}' \xrightarrow{\widehat{R}'} \widehat{A}'$ be two abelian extensions of the relative Rota-Baxter algebra $M \xrightarrow{R} A$ by the $2$-term chain complex $N \xrightarrow{S} B$. They are said to be {\bf isomorphic} if there is an isomorphism $(\phi, \psi)$ of relative Rota-Baxter algebras from $\widehat{M} \xrightarrow{\widehat{R}} \widehat{A}$ to $\widehat{M}' \xrightarrow{\widehat{R}'} \widehat{A}'$ making the following diagram commutative
\[
\xymatrixrowsep{0.36cm}
\xymatrixcolsep{0.36cm}
\xymatrix{
0 \ar[rr] &  & N \ar[rr] \ar[dd] \ar@{=}[rd] & & \widehat{M} \ar[rr] \ar[rd]^\psi \ar[dd] & & M \ar[dd] \ar[rr] \ar@{=}[rd] & & 0 \\
 & 0 \ar[rr] & & N \ar[rr] \ar[dd] & & \widehat{M}' \ar[rr] \ar[dd] & & M \ar[rr] \ar[dd] & & 0 \\
0 \ar[rr] &  & B \ar[rr] \ar@{=}[rd] & & \widehat{A} \ar[rr] \ar[rd]^\phi & & A \ar[rr] \ar@{=}[rd] & & 0 \\
 & 0 \ar[rr] & & B \ar[rr] & & \widehat{A}' \ar[rr] & & A \ar[rr] & & 0. \\
}
\]
\end{defn}

Let $M \xrightarrow{R} A$ be a relative Rota-Baxter algebra and $(N \xrightarrow{S} B, l, r)$ be a bimodule over it. We denote by $\mathrm{Ext}( M \xrightarrow{R} A ; N \xrightarrow{S} B)$ the set of isomorphism classes of abelian extensions of the relative Rota-Baxter algebra $M \xrightarrow{R} A$ by the $2$-term chain complex $N \xrightarrow{S} B$ so that the induced bimodule structure on $N \xrightarrow{S} B$ is the prescribed one.

With these notations, we have the following.

\begin{thm}\label{abelian-ext-thm}
Let $M \xrightarrow{R} A$ be a relative Rota-Baxter algebra and  $(N \xrightarrow{S} B, l, r)$ be a bimodule over it. Then there is a one-to-one correspondence between $\mathrm{Ext}( M \xrightarrow{R} A ; N \xrightarrow{S} B)$  and the second cohomology group $H^2_\mathsf{rRB}( M \xrightarrow{R} A ; N \xrightarrow{S} B)$.
\end{thm}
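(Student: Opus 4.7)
The plan is to construct mutually inverse maps between the set of isomorphism classes of abelian extensions and the second cohomology group, following the pattern familiar from abelian extensions of associative algebras and Rota-Baxter algebras. For the forward map $\Phi: \mathrm{Ext}(M \xrightarrow{R} A; N \xrightarrow{S} B) \to H^2_{\mathsf{rRB}}(M \xrightarrow{R} A; N \xrightarrow{S} B)$, I would start with an abelian extension as in (\ref{abelian-diagram}) and choose a section $(s,\overline{s})$ inducing the prescribed bimodule structure. Using the commutativity of (\ref{abelian-diagram}) to guarantee that the outputs land in $\ker p \cong B$ and $\ker \overline{p} \cong N$ respectively, define a $2$-cochain $(\alpha,\beta,\gamma) \in C^2_{\mathsf{rRB}}$ by
\begin{align*}
\alpha(a,a') &= s(a) \cdot_{\widehat{A}} s(a') - s(a \cdot a'), \\
\beta(a,m) &= s(a) \cdot_{\widehat{M}} \overline{s}(m) - \overline{s}(a \cdot_M m), \\
\beta(m,a) &= \overline{s}(m) \cdot_{\widehat{M}} s(a) - \overline{s}(m \cdot_M a), \\
\gamma(m) &= \widehat{R}(\overline{s}(m)) - s(R(m)).
\end{align*}
The three components of the equation $\delta_{\mathsf{rRB}}(\alpha,\beta,\gamma) = 0$ correspond respectively to the associativity of $\widehat{A}$, the $\widehat{A}$-bimodule axioms on $\widehat{M}$, and the relative Rota-Baxter identity for $\widehat{R}$, so the cocycle condition is verified by unwinding each definition.

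Next, I would show that the cohomology class is independent of the section. If $(s',\overline{s}')$ is a second section, the differences $s-s'$ and $\overline{s}-\overline{s}'$ factor through $\ker p$ and $\ker \overline{p}$, giving an element of $C^1_{\mathsf{rRB}} = \mathrm{Hom}(A,B) \oplus \mathrm{Hom}(M,N)$ whose $\delta_{\mathsf{rRB}}$-image is precisely the difference of the two cocycles. A parallel calculation using Definition \ref{iso-abelian} shows that isomorphic extensions yield cohomologous cocycles, because an isomorphism $(\phi,\psi)$ transports one section to a valid section of the other. For the inverse map $\Psi: H^2_{\mathsf{rRB}} \to \mathrm{Ext}$, given a cocycle $(\alpha,\beta,\gamma)$, build the extension on $\widehat{A} := A \oplus B$ with multiplication $(a,b) \cdot_{\widehat{A}} (a',b') = (a \cdot a',\; a \cdot_B b' + b \cdot_B a' + \alpha(a,a'))$, on $\widehat{M} := M \oplus N$ with $\widehat{A}$-bimodule structure deformed using $\beta$, and with operator $\widehat{R}(m,n) = (R(m),\; S(n) + \gamma(m))$. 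The three cocycle equations guarantee the associativity of $\widehat{A}$, the bimodule compatibilities of $\widehat{M}$, and the relative Rota-Baxter identity for $\widehat{R}$ respectively. Cohomologous cocycles give isomorphic extensions via the linear isomorphism induced by the witnessing $1$-cochain, so $\Psi$ descends to cohomology; and $\Phi \circ \Psi = \mathrm{id}$ is immediate with the canonical splitting, while $\Psi \circ \Phi = \mathrm{id}$ follows because any section of a given extension provides an isomorphism with its cocycle model.

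The main obstacle is the bookkeeping in the interlocking equation $\delta_{M,B}(\gamma) + h_R(\alpha,\beta) = 0$, where all three components of the cocycle interact through the relative Rota-Baxter operator. Rather than expanding by hand, I plan to leverage the embedding already exploited in proving $(\delta_{\mathsf{rRB}})^2 = 0$: realize $C^\bullet_{\mathsf{rRB}}(M \xrightarrow{R} A; N \xrightarrow{S} B)$ as a subcomplex of the cochain complex of the semidirect product relative Rota-Baxter algebra $M \oplus N \xrightarrow{R \oplus S} A \oplus B$ from Proposition \ref{semi-rrb}, taken with coefficients in its adjoint bimodule. Under this identification, the mixed cocycle identity becomes the restriction of a single condition capturing the failure of $\widehat{R}$ to be relative Rota-Baxter, which reduces the delicate verification to the well-understood adjoint-cohomology calculation. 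This same embedding trick also streamlines the verification that $\Psi$ lands in the correct set of extensions inducing the prescribed bimodule on $N \xrightarrow{S} B$.
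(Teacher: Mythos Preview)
Your proposal is correct and follows essentially the same approach as the paper: both define the forward map via the cocycle $(\alpha,\beta,\gamma)$ extracted from a section exactly as you write, build the inverse map by the twisted semidirect-product construction on $A\oplus B$ and $M\oplus N$ with $\widehat{R}(m,n)=(R(m),S(n)+\gamma(m))$, and conclude by checking the two maps are mutual inverses. The only minor difference is that the paper dismisses the cocycle verification as ``a straightforward calculation'' and does not invoke the semidirect-product embedding here, whereas you propose to route the mixed identity $\delta_{M,B}(\gamma)+h_R(\alpha,\beta)=0$ through that embedding---a perfectly valid shortcut consistent with how the paper itself establishes $(\delta_{\mathsf{rRB}})^2=0$.
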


\begin{proof}
Let (\ref{abelian-diagram}) be an abelian extension of $M \xrightarrow{R} A$ by the $2$-term chain complex $N \xrightarrow{S} B$. For any section $(s, \overline{s})$, we define maps 
\begin{align*}
&\alpha \in \mathrm{Hom}(A^{\otimes 2}, B), \quad \alpha(a, a') := s(a) \cdot_{\widehat{A}} s(a') - s (a \cdot a'),\\
&\beta \in \mathrm{Hom} (\mathcal{A}^{1,1}, N), \quad \begin{cases}
\beta (a,m) := s(a) \cdot_{\widehat{M}} \overline{s}(m) - \overline{s}(a \cdot_M m),\\
\beta (m,a) := \overline{s}(m) \cdot_{\widehat{M}} s(a) - \overline{s} (m \cdot_M a),\\
\end{cases}\\
&\gamma \in \mathrm{Hom}(M,B), \qquad \quad \gamma (m) := \widehat{R}(\overline{s}(m)) - s (R(m)). 
\end{align*}
By a straightforward calculation, it follows that $(\alpha, \beta, \gamma)$ is a $2$-cocycle in $Z^2_\mathsf{rRB} (M \xrightarrow{R} A; N \xrightarrow{S} B)$. Hence the abelian extension (\ref{abelian-diagram}) corresponds to a cohomology class in $H^2_\mathsf{rRB} (M \xrightarrow{R} A; N \xrightarrow{S} B)$. Moreover, the cohomology class does not depend on the choice of section.

Next, let $\widehat{M} \xrightarrow{\widehat{R}} \widehat{A}$ and $\widehat{M}' \xrightarrow{\widehat{R}'} \widehat{A}'$ be two isomorphic abelian extensions and the isomorphism is given by $(\phi, \psi)$ (see Definition \ref{iso-abelian}). Let $(s, \overline{s})$ be a section of the first abelian extension. Then we have
\begin{align*}
p' \circ (\phi \circ s) = p \circ s = \mathrm{id}_A ~~~~ \text{ and } ~~~~ \overline{p}' \circ (\psi \circ \overline{s}) = \overline{p} \circ \overline{s} = \mathrm{id}_M.
\end{align*}
Therefore, $(\phi \circ s, \psi \circ \overline{s})$ is a section of the second abelian extension.  If $(\alpha', \beta', \gamma')$ denotes the $2$-cocycle in $Z^2_\mathsf{rRB} (M \xrightarrow{R} A; N \xrightarrow{S} B)$ corresponding to the second abelian extension and its section $(\phi \circ s, \psi \circ \overline{s})$, then 
\begin{align*}
\alpha' (a, a') =~& (\phi \circ s)(a) \cdot_{\widehat{A}'} (\phi \circ s)(a') - (\phi \circ s) (a \cdot a') \\
=~& \phi \big(  s(a) \cdot_{\widehat{A}} s(a') - s(a \cdot a') \big)  \\
=~& \phi (\alpha (a,a')) = \alpha (a, a') \quad (\because \phi|_B = \mathrm{id}_B).
\end{align*}
Similarly, one can show that $\beta = \beta'$ and $\gamma = \gamma'$. So, $(\alpha, \beta, \gamma)$ and $(\alpha', \beta', \gamma')$ corresponds to the same element in $H^2_\mathsf{rRB} (M \xrightarrow{R} A; N \xrightarrow{S} B)$. Hence there is a well-defined map
\begin{align*}
\Theta_1 : \mathrm{Ext}(M \xrightarrow{R} A; N \xrightarrow{S} B) ~\rightarrow~ H^2_\mathsf{rRB} (M \xrightarrow{R} A; N \xrightarrow{S} B).
\end{align*}

\medskip

To define the map in the other direction, we take a $2$-cocycle $(\alpha, \beta, \gamma) \in Z^2_\mathsf{rRB} (M \xrightarrow{R} A; N \xrightarrow{S} B)$. In other words, we have
\begin{align}\label{2-co-equiv}
\delta_{A, B}(\alpha) = 0, \quad \delta_{A, N}^\alpha (\beta) = 0 ~~~~ \text{ and } ~~~~ \delta_{M, B} (\gamma) + h_R (\alpha , \beta) = 0.
\end{align}
Let $\widehat{A} = A \oplus B$ and $\widehat{M} = M \oplus N$. We define maps
\begin{align*}
\cdot_{\widehat{A}} : \widehat{A} \otimes \widehat{A} \rightarrow \widehat{A}, \qquad \cdot_{\widehat{M}} : \widehat{A} \otimes \widehat{M} \rightarrow \widehat{ M} \quad \text{ and } \quad \cdot_{\widehat{M}} : \widehat{M} \otimes \widehat{A} \rightarrow \widehat{M} ~~~~ \text{ by }
\end{align*}
\begin{align*}
(a, b) \cdot_{\widehat{A}} (a', b') =~& (a \cdot a',~ a \cdot_B b' + b \cdot_B a' + \alpha (a, a')),\\
(a, b) \cdot_{\widehat{M}} (m,n) =~& (a \cdot_M m, ~a \cdot_N n + r (b,m) + \beta (a,m)),\\
(m,n) \cdot_{\widehat{M}} (a, b) =~& (m \cdot_M a, ~ l (m,b) + n \cdot_N a + \beta (m, a)).
\end{align*}
Since $\alpha$ and $\beta$ satisfies the first two conditions of (\ref{2-co-equiv}), it follows that the above structure maps defines an associative algebra structure on $\widehat{A}$ and an $\widehat{A}$-bimodule structure on $\widehat{M}$. Finally, we define a map $\widehat{R} : \widehat{M} \rightarrow \widehat{A}$ by
\begin{align*}
\widehat{R}(m,n) = (R(m),~ S(n) + \gamma (m)), ~\text{ for } (m,n) \in \widehat{M}.
\end{align*}
It is also straightforward to see that $\widehat{R} : \widehat{M} \rightarrow \widehat{A}$ is a relative Rota-Baxter operator (follows from the last condition of (\ref{2-co-equiv})). In other words, $\widehat{M} \xrightarrow{\widehat{R}} \widehat{A}$ is a relative Rota-Baxter algebra.  Moreover, it is an abelian extension of $M \xrightarrow{R} A$ by the $2$-term chain complex $N \xrightarrow{S} B$ as of (\ref{abelian-diagram}) with the structure maps
\begin{align*}
i (b) = (0, b), \quad \overline{i}(n) = (0,n), \quad p(a,b) = a ~~~ \text{ and } ~~~ \overline{p}(m,n) = m.
\end{align*}
Let $(\alpha', \beta', \gamma') \in Z^2_\mathsf{rRB} (M \xrightarrow{R} A; N \xrightarrow{S} B)$ be another $2$-cocycle cohomologous to $(\alpha, \beta, \gamma)$, say
\begin{align*}
(\alpha, \beta, \gamma) - (\alpha', \beta', \gamma') = \delta_\mathsf{rRB} (\theta, \vartheta),
\end{align*}
for some $(\theta, \vartheta) \in C^1_\mathsf{rRB} (M \xrightarrow{R} A; N \xrightarrow{S} B).$ We define maps $\phi : A \oplus B \rightarrow A \oplus B$ and $\psi : M \oplus N \rightarrow M \oplus N$ by
\begin{align*}
\phi (a,b) =~& (a,~ b + \theta (a)),\\
\psi (m,n) =~& (m, ~ n + \vartheta (m)).
\end{align*}
Then it is easy to check that $(\phi, \psi)$ defines an isomorphism of abelian extensions from $\widehat{M}  \xrightarrow{\widehat{R}} \widehat{A}$ to $\widehat{M}'  \xrightarrow{\widehat{R}'} \widehat{A}'$. Thus, there is a well-defined map
\begin{align*}
\Theta_2 : H^2_\mathsf{rRB} (M \xrightarrow{R} A; N \xrightarrow{S} B) ~\rightarrow ~ \mathrm{Ext} (M \xrightarrow{R} A; N \xrightarrow{S} B).
\end{align*}
Finally, the maps $\Theta_1$ and $\Theta_2$ are inverses to each other. This completes the proof.
\end{proof}

\section{Classifications of skeletal homotopy relative Rota-Baxter algebras}\label{sec-6}

The notion of homotopy relative Rota-Baxter Lie algebras was introduced by Lazarev, Sheng, and Tang \cite{laza-rota}. Homotopy theory of relative Rota-Baxter associative algebras is studied in \cite{DasSK}. Roughly, a homotopy relative Rota-Baxter (associative) algebra is a triple consisting of an $A_\infty$-algebra, a bimodule over the $A_\infty$-algebra and a homotopy relative Rota-Baxter operator. In this section, we mainly focus on homotopy relative Rota-Baxter algebras whose underlying $A_\infty$-algebra and the bimodule are both concentrated in degrees $0$ and $1$. We call them $2$-term homotopy relative Rota-Baxter algebras. We classify `skeletal' homotopy relative Rota-Baxter algebras.

\begin{defn}\label{2t}\cite{stas}
A {\bf $2$-term $A_\infty$-algebra} $\mathcal{A} =(A_1 \xrightarrow{d} A_0, \mu_2, \mu_3)$ consists of a chain complex $A_1 \xrightarrow{d} A_0$ together with bilinear maps $\mu_2 : A_i \otimes A_j \rightarrow A_{i+j}$ $(0 \leq i, j, i+j \leq 1)$ and a trilinear map $\mu_3 : A_0 \otimes A_0 \otimes A_0 \rightarrow A_1$ satisfying the following set of identities: for $a, b, c, e \in A_0$ and $p, q\in A_1$,
\begin{itemize}
\item[(i)] $d \mu_2 (a, p) = \mu_2 (a, dp),$\\
$d \mu_2 (p, a) = \mu_2 (dp, a),$ 
\item[(ii)] $\mu_2 (dp, q) = \mu_2 (p, dq),$
\item[(iii)] $d \mu_3 (a, b, c) = \mu_2 (\mu_2 (a, b), c) - \mu_2 (a, \mu_2 (b, c)),$\\
$\mu_3 (a, b, dp) = \mu_2 (\mu_2 (a, b), p) - \mu_2 (a, \mu_2 (b, p)),$\\
$\mu_3 (a, dp, c) = \mu_2 (\mu_2 (a, p), c) - \mu_2 (a, \mu_2 (p, c))$,\\
$\mu_3 (dp, b, c) = \mu_2 (\mu_2 (p, b), c) - \mu_2 (p, \mu_2 (b, c)),$
\item[(iv)] $\mu_3 (\mu_2 (a, b), c, e) - \mu_3 (a, \mu_2 (b, c), e) + \mu_3 (a, b, \mu_2 (c, e)) = \mu_2 (\mu_3(a, b, c), e) + \mu_2 (a, \mu_3 (b, c, e)).$
\end{itemize}
\end{defn}

The notion of bimodules over a $2$-term $A_\infty$-algebra is a particular case of bimodules over an $A_\infty$-algebra \cite{keller}.

\begin{defn}
Let $\mathcal{A} = (A_1 \xrightarrow{d} A_0, \mu_2, \mu_2)$ be a $2$-term $A_\infty$-algebra. A {\bf bimodule} over it consists of a tuple $\mathcal{M} = (M_1 \xrightarrow{d^\mathcal{M}} M_0, \mu_2^\mathcal{M}, \mu_3^\mathcal{M})$ of a chain complex $M_1 \xrightarrow{d^\mathcal{M}} M_0$ together with bilinear maps 
\begin{align*}
\mu_2^\mathcal{M} : A_i \otimes M_j \rightarrow M_{i+j}, \qquad
\mu_2^\mathcal{M} : M_i \otimes A_j \rightarrow M_{i+j} \quad (0 \leq i , j, i+j \leq 1)
\end{align*}
and a trilinear map $\mu_3^\mathcal{M} : \mathcal{A}_0^{2,1} \rightarrow M_1$ \big(here $\mathcal{A}_0^{2,1} = (A_0 \otimes A_0 \otimes M_0) \oplus (A_0 \otimes M_0 \otimes A_0) \oplus (M_0 \otimes A_0 \otimes A_0)$\big)
satisfying the all possible identities corresponding to (i)-(iv) of Definition \ref{2t} where exactly one of the variables in each identity is replaced by an element of $\mathcal{M}$ and the corresponding operation $d, \mu_2$ or $\mu_3$ is replaced by $d^\mathcal{M}, \mu_2^\mathcal{M}$ or $\mu_3^\mathcal{M}$.
\end{defn}

\begin{exam}
Any $2$-term $A_\infty$-algebra $\mathcal{A} = (A_1 \xrightarrow{d} A_0, \mu_2, \mu_3)$ is a bimodule over itself. This is called the adjoint bimodule.
\end{exam}

\begin{defn}\label{defn-hrrbo}
Let $\mathcal{A} = (A_1 \xrightarrow{d} A_0, \mu_2, \mu_3)$ be a $2$-term $A_\infty$-algebra and $\mathcal{M} = (M_1 \xrightarrow{d^\mathcal{M}} M_0, \mu_2^\mathcal{M}, \mu_3^\mathcal{M})$ be a bimodule over it. A ($2$-term) {\bf homotopy relative Rota-Baxter operator} (on $\mathcal{A}$ with respect to $\mathcal{M}$) is a triple $\mathcal{R} = (R_0, R_1, R_2)$ of maps
\begin{align*}
R_0 : M_0 \rightarrow A_0, \qquad R_1 : M_1 \rightarrow A_1 ~~~ \text{ and } ~~~ R_2 : M_0 \otimes M_0 \rightarrow A_1
\end{align*} 
satisfying
\begin{itemize}
\item[(i)] $d \circ R_1 = R_0 \circ d^\mathcal{M}$,
\item[(ii)] $R_0 \big( \mu_2^\mathcal{M} (R_0 (m), m') + \mu_2^\mathcal{M} (m, R_0 (m')) \big) - \mu_2 (R_0 (m), R_0 (m')) = d (R_2 (m, m'))$, 
\item[(iii)] $R_1 \big(  \mu_2^\mathcal{M} (R_0 (m), n) + \mu_2^\mathcal{M} (m, R_1 (n)) \big) - \mu_2 (R_0 (m), R_1 (n)) = R_2 (m, d^\mathcal{M} (n))$,
\item[(iv)] $ R_1 \big(  \mu_2^\mathcal{M} (R_1 (n), m) + \mu_2^\mathcal{M} (n, R_0 (m))  \big) - \mu_2 (R_1 (n), R_0 (m)) = R_2 (d^\mathcal{M} (n), m)$,
\item[(v)] $ \mu_2 (R_0 (m), R_2 (m', m'')) - R_1 (\mu_2^\mathcal{M} (m, R_2 (m', m'')) ) - R_2 \big( \mu_2^\mathcal{M} (R_0 (m), m') + \mu_2^\mathcal{M} (m, R_0 (m')),~ m''  \big) \\
+ R_2 \big(m,~ \mu_2^\mathcal{M} ( R_0 (m') , m'') + \mu_2^\mathcal{M} (m', R_0 (m'') )   \big) - \mu_2 (R_2 (m, m'), R_0 (m'')) + R_1 (\mu_2^\mathcal{M} (R_2 (m, m'), m'')) \\
+ \mu_3^\mathcal{M} (m, R_0 (m'), R_0 (m'')) + \mu_3^\mathcal{M} (R_0 (m), m', R_0 (m'')) + \mu_3^\mathcal{M} (R_0 (m), R_0 (m'), m'') \\
= \mu_3 (R_0 (m), R_0 (m'), R_0 (m'')),$
\end{itemize}
for $m, m', m'' \in M_0$ and $n \in M_1$.
\end{defn}

A  ($2$-term) {\bf homotopy relative Rota-Baxter algebra} is a triple $(\mathcal{A}, \mathcal{M}, \mathcal{R})$ in which $\mathcal{A}$ is a $2$-term $A_\infty$-algebra, $\mathcal{M}$ is an $\mathcal{A}$-bimodule and $\mathcal{R}$ is a homotopy relative Rota-Baxter operator. Like nonhomotopic case, we denote a homotopy relative Rota-Baxter algebra as above by $\mathcal{M} \xrightarrow{\mathcal{R}} \mathcal{A}$.

\begin{remark}
Let $\mathcal{R}$ be a ($2$-term) homotopy relative Rota-Baxter operator on an associative $2$-algebra $\mathcal{A}$ with respect to a bimodule $\mathcal{M}$. Then the standard skew-symmetrization process yields a relative Rota-Baxter operator (in the sense of \cite{Sheng}) on the associated Lie $2$-algebra with respect to the representation. For more details, we refer to \cite[Proposition 7.8]{DasSK}.
\end{remark}

Next, we consider skeletal homotopy relative Rota-Baxter algebras and classify them in terms of $3$-cocycles in the cochain complex of relative Rota-Baxter algebras.

\begin{defn}
A homotopy relative Rota-Baxter algebra $\mathcal{M} \xrightarrow{\mathcal{R}} \mathcal{A}$ with
\begin{align*}
\mathcal{A} = (A_1 \xrightarrow{d} A_0, \mu_2, \mu_3), \quad \mathcal{M} = (M_1 \xrightarrow{d^\mathcal{M}} M_0, \mu_2^\mathcal{M}, \mu_3^\mathcal{M}) ~~~ \text{ and } ~~~ \mathcal{R} = (R_0, R_1, R_2)
\end{align*}
is said to be {\bf skeletal} if $d = 0$ and $d^\mathcal{M} = 0$.
\end{defn}

\begin{thm}\label{skeletal-thm}
There is a one-to-one correspondence between skeletal homotopy relative Rota-Baxter algebras and triples $(M \xrightarrow{R} A; N \xrightarrow{S} B ; (\alpha, \beta, \gamma))$ in which $M \xrightarrow{R} A$ is a relative Rota-Baxter algebra, $N \xrightarrow{S} B$ is a bimodule and $(\alpha, \beta, \gamma) \in Z^3_\mathsf{rRB} (M \xrightarrow{R} A ; N \xrightarrow{S} B)$ is a $3$-cocycle.
\end{thm}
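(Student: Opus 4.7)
The plan is to establish a bijection by direct comparison of defining axioms. Given a skeletal homotopy relative Rota-Baxter algebra $\mathcal{M}\xrightarrow{\mathcal{R}}\mathcal{A}$ with $d=0=d^{\mathcal{M}}$, I will first isolate the ``linear'' data. Since the differentials vanish, the components of $\mu_2$ and $\mu_2^{\mathcal{M}}$ split by degree, and the $A_\infty$ axioms involving only $\mu_2$ force the following: $A:=A_0$ with $\mu_2|_{A_0\otimes A_0}$ is an associative algebra; $B:=A_1$ is an $A$-bimodule via $\mu_2|_{A_0\otimes A_1}$ and $\mu_2|_{A_1\otimes A_0}$ (with $\mu_2|_{A_1\otimes A_1}$ forced to vanish by degree); and analogously $M:=M_0$, $N:=M_1$ are $A$-bimodules via the degree-preserving pieces of $\mu_2^{\mathcal{M}}$. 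The two remaining mixed components $\mu_2^{\mathcal{M}}:M\otimes B\to N$ and $\mu_2^{\mathcal{M}}:B\otimes M\to N$ become the bilinear maps $l,r$, and the bimodule $A_\infty$ axioms at the $\mu_2$-level give precisely (\ref{bimod-l}) and (\ref{bimod-r}).

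Setting $R:=R_0$ and $S:=R_1$, conditions (i)--(iv) of Definition \ref{defn-hrrbo} collapse as follows: (i) is automatic; (ii) is the relative Rota-Baxter identity (\ref{bimod-rr}) for $R$ (so $M\xrightarrow{R}A$ is a relative Rota-Baxter algebra); and (iii), (iv) become exactly the identities (\ref{bimod-rs}) and (\ref{bimod-sr}), showing that $(N\xrightarrow{S}B, l, r)$ is a bimodule over $M\xrightarrow{R}A$. Now define
\[
\alpha:=\mu_3,\qquad \beta:=\mu_3^{\mathcal{M}},\qquad \gamma:=R_2,
\]
so $(\alpha,\beta,\gamma)\in C^3_{\mathsf{rRB}}(M\xrightarrow{R}A; N\xrightarrow{S}B)$. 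The remaining cubic $A_\infty$ axiom (iv) of Definition \ref{2t} for $\mathcal{A}$ is exactly the Hochschild cocycle condition $\delta_{A,B}(\alpha)=0$, and the corresponding bimodule $A_\infty$ axiom for $\mathcal{M}$ is exactly $\delta_{A,N}^\alpha(\beta)=0$.

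The crucial identification is that condition (v) of Definition \ref{defn-hrrbo} translates, term for term, into $\delta_{M,B}(\gamma)+h_R(\alpha,\beta)=0$. Indeed, under the substitutions above, the first two summands of (v) give $R(m)\cdot_B \gamma(m',m'')-S(l(m,\gamma(m',m'')))= m\triangleright_B\gamma(m',m'')$; the next two summands give $-\gamma(m\ast_R m',m'')+\gamma(m,m'\ast_R m'')$ via the definition of $\ast_R$; the fifth and sixth summands yield $-\gamma(m,m')\triangleleft_B m''$; and the final four summands involving $\mu_3,\mu_3^{\mathcal{M}}$ match $h_R(\alpha,\beta)(m,m',m'')$ (up to the sign $(-1)^3$ built into $h_R$).

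For the converse, given the data $(M\xrightarrow{R}A; N\xrightarrow{S}B; (\alpha,\beta,\gamma))$, I reverse the construction: put $\mathcal{A}=(B\xrightarrow{0}A,\mu_2,\alpha)$ where $\mu_2$ is assembled from the algebra structure on $A$ and the $A$-bimodule structure on $B$ (with $\mu_2|_{B\otimes B}=0$); put $\mathcal{M}=(N\xrightarrow{0}M,\mu_2^{\mathcal{M}},\beta)$ with $\mu_2^{\mathcal{M}}$ assembled from the $A$-bimodule structures on $M,N$ together with $l,r$; and set $\mathcal{R}=(R,S,\gamma)$. The three components of the cocycle equation $\delta_\mathsf{rRB}(\alpha,\beta,\gamma)=0$, together with the relative Rota-Baxter and bimodule axioms, yield respectively the $A_\infty$ axioms for $\mathcal{A}$ and $\mathcal{M}$ and conditions (i)--(v) of Definition \ref{defn-hrrbo}. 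The two assignments are mutually inverse by construction. The main technical point, and the only place requiring sustained care, is the term-by-term match in the third paragraph: the sign conventions in $h_R$ and in $\delta_{M,B}$ must be tracked carefully so that the Rota-Baxter mixing of $\alpha$ with $R^{\otimes 3}$ and of $\beta$ with two copies of $R$ precisely reproduces condition (v).
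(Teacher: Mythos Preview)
Your proof is correct and follows essentially the same route as the paper: extract the strict data from the degree-zero pieces, identify $(\alpha,\beta,\gamma)=(\mu_3,\mu_3^{\mathcal{M}},R_2)$, and match the three components of $\delta_{\mathsf{rRB}}(\alpha,\beta,\gamma)=0$ with, respectively, axiom (iv) of Definition~\ref{2t}, its bimodule analogue, and condition (v) of Definition~\ref{defn-hrrbo}. Your explicit term-by-term identification of condition (v) with $\delta_{M,B}(\gamma)+h_R(\alpha,\beta)=0$ is in fact more detailed than the paper's, which simply asserts this equality; note in passing that condition (v) as stated in the paper appears to omit an application of $R_1$ to the three $\mu_3^{\mathcal{M}}$ terms (they land in $M_1$, not $A_1$), and your use of $h_R$---which applies $S$ to the $\beta$-terms---is the correct reading.
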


\begin{proof}
Let $\mathcal{M} \xrightarrow{\mathcal{R}} \mathcal{A}$ be a skeletal homotopy relative Rota-Baxter algebra, where
\begin{align*}
\mathcal{A} = (A_1 \xrightarrow{0} A_0, \mu_2, \mu_3), \quad \mathcal{M} = (M_1 \xrightarrow{0} M_0, \mu_2^\mathcal{M}, \mu_3^\mathcal{M}) ~~~ \text{ and } ~~~ \mathcal{R} = (R_0, R_1, R_2).
\end{align*}
Since $\mathcal{A} = (A_1 \xrightarrow{0} A_0, \mu_2, \mu_3)$ is a $2$-term $A_\infty$-algebra, it follows from Definition \ref{2t} that $A_0$ is an associative algebra with the multiplication $\mu (a, b) := \mu_2 (a, b)$, for $a, b \in A_0$, and $A_1$ is a bimodule over the associative algebra $A_0$ with left and right $A_0$-actions $l_{A_1} : A_0 \otimes A_1 \rightarrow A_1$ and $r_{A_1} : A_1 \otimes A_0 \rightarrow A_1$ given by
\begin{align*}
l_{A_1} (a, p) := \mu_2 (a, p), \quad r_{A_1} (p, a) := \mu_2 (p, a), ~ \text{ for } a \in A_0, p \in A_1.
\end{align*}
Further, the map $\mu_3 \in \mathrm{Hom} (A_0^{\otimes 3}, A_1)$ is a $3$-cocycle in the Hochschild cochain complex of $A_0$ with coefficients in the $A_0$-bimodule $A_1$. Moreover, $\mathcal{M} = (M_1 \xrightarrow{0} M_0, \mu_2^\mathcal{M}, \mu_3^\mathcal{M})$ is a bimodule over $\mathcal{A}$ implies that both $M_0$ and $M_1$ are bimodules over the associative algebra $A_0$ with left and right actions given by
\begin{align*}
&l_{M_0} (a, m) = \mu_2^\mathcal{M} (a, m), \quad r_{M_0} (m, a) = \mu_2^\mathcal{M} (m, a), ~ \text{ for } a \in A_0, m \in M_0, \\
&l_{M_1} (a, n) = \mu_2^\mathcal{M} (a, n), \quad r_{M_1} (n, a) = \mu_2^\mathcal{M} (n, a), ~ \text{ for } a\in A_0, n \in M_1.
\end{align*}
Further, we define maps $l : M_0 \otimes A_1 \rightarrow M_1$ and $r : A_1 \otimes M_0 \rightarrow M_1$ by $l (m, p) = \mu_2^\mathcal{M} (m, p)$ and $r (p, m) = \mu_2^\mathcal{M} (p, m)$, for $m \in M_0$, $p \in A_1$. It is also easy to verify that the map $\mu_3^\mathcal{M} \in \mathrm{Hom}(\mathcal{A}_0^{2,1}, M_1)$ satisfies 
$\delta^{\mu_3}_{A_0, M_1} (\mu_3^\mathcal{M}) = 0$.

Finally, since $\mathcal{R} = (R_0, R_1, R_2)$ is a homotopy relative Rota-Baxter operator, it follows from the conditions (ii)-(iv) of Definition \ref{defn-hrrbo} that $M_0 \xrightarrow{R_0} A_0$ is a relative Rota-Baxter algebra and $(M_1 \xrightarrow{R_1} A_1, l, r)$ is a bimodule over it. Finally, the condition (v) of Definition \ref{defn-hrrbo} implies that $\delta_{M_0, A_1} (R_2) + h_{R_0} (\mu_3, \mu_3^\mathcal{M}) = 0$. Hence, we have
\begin{align*}
{\delta_\mathsf{rRB}} ((\mu_3, \mu_3^\mathcal{M}, R_2 )) = \big( \delta_{A_0, A_1} (\mu_3), ~ \delta^{\mu_3}_{A_0, M_1} (\mu_3^\mathcal{M}),~ \delta_{M_0, A_1} (R_2) + h_{R_0} (\mu_3, \mu_3^\mathcal{M})  \big) = 0.
\end{align*}
This shows that $(M_0 \xrightarrow{R_0} A_0 ; M_1 \xrightarrow{R_1} A_1 ; (\mu_3, \mu_3^\mathcal{M}, R_2))$ is a required triple.

\medskip

Conversely, let $(M \xrightarrow{R} A; N \xrightarrow{S} B ; (\alpha, \beta, \gamma))$ be a triple consisting of a relative Rota-Baxter algebra $M \xrightarrow{R} A$, a bimodule $(N \xrightarrow{S} B, l, r)$ and a $3$-cocycle
\begin{align*}
(\alpha, \beta, \gamma) \in Z^3_\mathsf{rRB} (M \xrightarrow{R} A; N \xrightarrow{S} B) \subset \mathrm{Hom} (A^{\otimes 3}, B) \oplus \mathrm{Hom}(\mathcal{A}^{2,1}, N) \oplus \mathrm{Hom} (M^{\otimes 2}, B).
\end{align*}
Then it is easy to verify that $\mathcal{A} = (B \xrightarrow{0} A, \mu_2, \mu_3)$ is a $2$-term $A_\infty$-algebra, where $\mu_2$ and $\mu_3$ are given by
\begin{center}
$\mu_2 (a, a') = a \cdot a', \quad \mu_2 (a, b) = a \cdot_B b, \quad \mu_2 (b, a) = b \cdot_B a, ~\text{ for } a, a' \in A, b \in B,$ \\
$\mu_3 (a, a', a'') = \alpha (a, a', a''), \text{ for } a, a', a'' \in A.$
\end{center}
Moreover, $\mathcal{M} = (N \xrightarrow{0} M, \mu_2^\mathcal{M}, \mu_3^\mathcal{M})$ is a bimodule over $\mathcal{A}$, where
\begin{align*}
&\mu_2^\mathcal{M} (a, m)= a \cdot_M m, \quad \mu_2^\mathcal{M} (a, n) = a \cdot_N n, \quad \mu_2^\mathcal{M} (b, m) = r (b, m),\\
&\mu_2^\mathcal{M} (m, a) = m \cdot_M a, \quad \mu_2^\mathcal{M} (n, a) = n \cdot_N a, \quad \mu_2^\mathcal{M} (m, b) = l(m, b),
\end{align*}
for $a \in A,~ b \in B,~ m \in M,~ n \in N$, and $\mu_3^\mathcal{M} : \mathcal{A}^{2,1} \rightarrow N$ is given by $\mu_3^\mathcal{M} = \beta$. Further, one can show that $\mathcal{R} = (R, S, \gamma)$ is a homotopy relative Rota-baxter operator. In other words, $\mathcal{M} \xrightarrow{\mathcal{R}} \mathcal{A}$ is a (skeletal) homotopy relative Rota-Baxter algebra.

The above two correspondences are inverses to each other, which completes the proof.
\end{proof}

\medskip

\noindent {\bf Acknowledgements.} We thank the anonymous referee for his useful comments and suggestions, which improved the presentation of the article. A. Das would like to thank IIT Kharagpur (India) for providing a beautiful academic atmosphere where some parts of the research were carried out. Some part of the research work of S. K. Mishra was supported by the NBHM postdoctoral fellowship, India.

\noindent {\bf Data availability.} Data sharing not applicable to this article as no datasets were generated or analysed during the current study.

\noindent {\bf Funding.} The research work of S. K. Mishra was supported by NBHM Postdoctoral Fellowship (Grant number 0204/8/2020/R\&D-II/9976).

\noindent {\bf Compliance with Ethical Standards.}
The authors have no relevant financial or non-financial interests to disclose. 

\end{document}